\newcommand{\tuple}[1]{\ensuremath{\langle{#1}\rangle}}
\newcommand{\eq}{\approx}
\newcommand{\N}{\ensuremath{\mathbb{N}}}
\newcommand{\ops}{\ensuremath{\mathop{\star}}}
\newcommand{\lang}{\mathcal{L}}
\newcommand{\langs}{\mathcal{L}_s}
\newcommand{\model}[1]{{\mathfrak{{#1}}}}
\newcommand{\semvalue}[1]{\ensuremath{\left\|{#1}\right\|}}
\newcommand{\All}[1]{\ensuremath{(\all{#1})}}
\newcommand{\Exi}[1]{\ensuremath{(\exi{#1})}}
\newcommand{\all}{\ensuremath{\forall}}
\newcommand{\exi}{\ensuremath{\exists}}
\newcommand{\K}{\cls{K}}
\newcommand{\V}{\cls{V}}
\newcommand{\mK}{\mathit{m}\K}
\newcommand{\mV}{\mathit{m}\V}
\newcommand{\f}{\ensuremath{\varphi}}
\newcommand{\p}{\ensuremath{\psi}}
\newcommand{\x}{\ensuremath{\chi}}
\newcommand{\pd}{\cdot}
\newcommand{\zr}{{\rm f}}
\newcommand{\ut}{{\rm e}}
\newcommand{\mt}{\land}
\newcommand{\jn}{\lor}
\newcommand{\FLe}{\ensuremath{\mathrm{FL}_\mathrm{e}}}
\renewcommand{\a}{\ensuremath{\alpha}}
\renewcommand{\b}{\ensuremath{\beta}}
\newcommand{\bo}{\ensuremath{\Box}}
\newcommand{\di}{\ensuremath{\Diamond}}
\newcommand{\De}{\mathrm{\Delta}}
\newcommand{\Ga}{\mathrm{\Gamma}}
\newcommand{\Si}{\mathrm{\Sigma}}
\newcommand{\fosc}[1]{\vDash^{\forall}_{#1}}
\newcommand{\der}[1]{\vdash_{_\lgc{#1}}}
\newcommand{\lgc}[1]{\mathsf{#1}}
\renewcommand{\alg}[1]{\mathbf{#1}}
\newcommand{\cls}[1]{\mathcal{#1}}
\newcommand{\mcls}[1]{\mathit{m}\cls{#1}}
\newcommand*{\pfa}[1]{\mbox{\footnotesize $#1$}} 	 
\newcommand{\mfml}{\ensuremath{{\rm Fm}_{\bo}}}
\newcommand{\ofml}{\ensuremath{{\rm Fm}_{\forall}^{1}}}
\newcommand{\ofmls}{\ensuremath{{\rm Fm}_{\forall}^{1+}}}
\newcommand*{\md}{{\rm md}}
\newcommand*{\height}{{\rm ht}}
\newcommand*{\fFLe}{\lgc{\all^+_1 FL_e}}
\newcommand*{\fFLew}{\lgc{\all^+_1 FL_{ew}}}
\newcommand*{\fFLec}{\lgc{\all^+_1 FL_{ec}}}
\newcommand*{\seq}{{\vphantom{A}\Rightarrow{\vphantom{A}}}}
\newcommand*{\rseq}{\Rightarrow}
\newcommand{\idr}{(\textsc{id})}
\newcommand{\flr}{(\zr\!\rseq)}
\newcommand{\frr}{(\rseq\!\zr)}
\newcommand{\tlr}{(\ut\!\rseq)}
\newcommand{\trr}{(\rseq\!\ut)}
\newcommand{\olr}{(\jn\!\rseq)}
\newcommand{\orr}{(\rseq\!\jn)}
\newcommand{\alr}{({\mt\!\rseq})}
\newcommand{\arr}{({\rseq\!\mt})}
\newcommand{\falr}{({\all\!\rseq})}
\newcommand{\farr}{(\rseq\!\all)}
\newcommand{\elr}{(\exi\!\rseq)}
\newcommand{\err}{(\rseq\!\exi)}
\newcommand{\pdlr}{(\pd\!\rseq)}
\newcommand{\pdrr}{(\rseq\!\pd)}
\newcommand{\ilr}{(\to\rseq)}
\newcommand{\irr}{(\rseq\to)}
\newcommand{\wkr}{\textup{\sc (w)}}
\newcommand{\cnr}{\textup{\sc (c)}}
\begin{document}

\begin{frontmatter}
  \title{Algebraic Semantics for One-Variable Lattice-Valued Logics}
     \author{Petr Cintula}\footnote{Supported by Czech Science Foundation project GA22-01137S and by RVO 67985807.}
    \address{Institute of Computer Science of the Czech Academy of Sciences\\ Prague, Czech Republic \\ \texttt{cintula@cs.cas.cz}}
   \author{George Metcalfe}\footnote{Supported by Swiss National Science Foundation grant 200021\textunderscore 184693.}
   \author{Naomi Tokuda}
  \address{Mathematical Institute, University of Bern, Switzerland \\ $\{$\texttt{george.metcalfe,naomi.tokuda}$\}$\texttt{@unibe.ch}}

\begin{abstract} 
The one-variable fragment of any first-order logic may be considered as a modal logic, where the universal and existential quantifiers are replaced by a box and diamond modality, respectively. In several cases, axiomatizations of algebraic semantics for these logics have been obtained: most notably, for the modal counterparts $\lgc{S5}$ and $\lgc{MIPC}$ of the one-variable fragments of first-order classical logic and intuitionistic logic, respectively. Outside the setting of first-order intermediate logics, however, a general approach is lacking. This paper provides the basis for such an approach in the setting of first-order lattice-valued logics, where formulas are interpreted in algebraic structures with a lattice reduct. In particular, axiomatizations are obtained for modal counterparts of one-variable fragments of a broad family of these logics by generalizing a functional representation theorem of Bezhanishvili and Harding for monadic Heyting algebras. An alternative proof-theoretic proof is also provided for one-variable fragments of first-order substructural logics that have a cut-free sequent calculus and admit a certain bounded interpolation property.
\end{abstract}

  \begin{keyword}
 Modal Logic, Substructural Logics, Lattice-Valued Logics, One-Variable Fragment, Superamalgamation, Sequent Calculus, Interpolation.
  \end{keyword}
 \end{frontmatter}


\section{Introduction}\label{s:introduction}

The {\em one-variable fragment} of any first-order logic --- the valid formulas built using one variable $x$, unary relation symbols, propositional connectives, and quantifiers $\All{x}$ and $\Exi{x}$ --- may be studied as an ``S5-like'' modal logic. Just replace each occurrence of an atom $P(x)$ with a propositional variable $p$, and $\All{x}$ and $\Exi{x}$ with $\bo$ and $\di$, respectively. The first-order semantics typically induces a relational semantics for this modal logic, but an axiomatization of its algebraic semantics may be rather elusive; in particular, an axiomatization of the first-order logic does not directly yield a Hilbert-style axiomatization of (the modal counterpart of) its one-variable fragment.

Such axiomatizations have been obtained in several notable cases. Monadic Boolean algebras~\cite{Hal55} and monadic Heyting algebras~\cites{MV57,Bul66} correspond to the modal counterparts $\lgc{S5}$ and $\lgc{MIPC}$ of the one-variable fragments of first-order classical logic and intuitionistic logic, respectively. More generally, varieties of monadic Heyting algebras corresponding to  modal counterparts of one-variable fragments of first-order intermediate logics  (based on frames with and without constant domains) have been investigated in~\cites{OS88,Suz89,Suz90,Bez98,BH02,CR15,CMRR17,CMRT22}. One-variable fragments of certain first-order many-valued logics have also been studied in some depth. In particular, modal counterparts of the one-variable fragments of first-order \L ukasiewicz logic and Abelian logic correspond to monadic MV-algebras~\cites{Rut59,dNG04,CCVR20} and monadic Abelian $\ell$-groups~\cite{MT20}, respectively. 

In this paper, we take first steps towards a general approach to addressing this axiomatization problem. As a starting point, we introduce in Section~\ref{s:one-variable}  (first-order) one-variable lattice-valued logics, where formulas are interpreted in structures defined over complete $\lang$-lattices: algebraic structures for a given algebraic signature $\lang$ that have a lattice reduct. In Section~\ref{s:algebraic}, we then define an m-$\lang$-lattice to be an $\lang$-lattice expanded with modalities $\bo$ and $\di$ satisfying certain natural equations, and for a class $\K$ of $\lang$-lattices, let $\mK$ denote the class of m-$\lang$-lattices with an $\lang$-lattice reduct in $\K$. (In particular, if $\K$ is the variety of Boolean algebras or Heyting algebras, $\mK$ is the variety of monadic Boolean algebras or monadic Heyting algebras, respectively.) Generalizing previous results in the literature (see, e.g.,~\cites{Bez98,Tuy21}), we obtain a one-to-one correspondence between m-$\lang$-lattices and $\lang$-lattices equipped with a subalgebra that satisfies a certain relative completeness condition. 

Given a variety $\V$ of $\lang$-lattices, equational consequence in the variety $\mV$ always implies consequence in the one-variable lattice-valued logic based on the complete members of $\V$. In Section~\ref{s:functional}, we show that the converse also holds if $\V$ admits regular completions and has the superamalgamation property. The key tool in this proof is a  generalization of Bezhanishvili and Harding's functional representation theorem for monadic Heyting algebras~\cite{BH02}. In Section~\ref{s:substructural}, we show that this theorem applies to certain varieties of \FLe-algebras, yielding axiomatizations of the modal counterparts of one-variable fragments of certain first-order substructural logics. In Section~\ref{s:prooftheory}, we provide an alternative proof of these results for substructural logics by establishing a bounded interpolation property for cut-free sequent calculi for the one-variable fragments. 

Finally, in Section~\ref{s:concluding} we sketch a broader perspective for one-variable lattice-valued logics based on an arbitrary class $\K$ of complete $\lang$-lattices, observing that when $\K$ consists of the complete members of a variety $\V$, the corresponding class of m-$\lang$-lattices need not in general be $\mV$ or even a variety.


\section{One-Variable Lattice-Valued Logics}\label{s:one-variable}

Let $\lang_n$ denote the set of operation symbols of an algebraic signature $\lang$ of arity $n\in\N$, and call $\lang$ {\em lattice-oriented} if $\lang_2$ contains distinct symbols $\mt$ and $\jn$. We will assume throughout this paper that $\lang$ is a fixed lattice-oriented signature. 

An {\em $\lang$-lattice} is an algebraic structure $\alg{A} = \tuple{A,\{\ops^{\alg{A}}\mid n\in\N,\,\ops\in\lang_n\}}$ such that $\tuple{A,\mt^{\alg{A}},\jn^{\alg{A}}}$ is a lattice with order $x\leq^{\alg{A}}y :\Longleftrightarrow x \mt^{\alg{A}} y = x$ and $\ops^{\alg{A}}$ is an $n$-ary operation on $A$ for each $\ops\in\lang_n$ ($n\in\N$). As usual, we omit superscripts when these are clear from the context. 

We call $\alg{A}$ {\em complete} if its lattice reduct  $\tuple{A,\mt,\jn}$ is complete, i.e., $\bigwedge X$ and $\bigvee X$ exist in $A$ for all $X \subseteq A$. Given a class $\K$ of $\lang$-lattices, we denote by $\overline{\K}$ the class of its complete members and say that $\K$ {\em admits regular completions} if for any $\alg{A}\in\K$, there exist a $\alg{B}\in\overline{\K}$ and an embedding $f\colon\alg{A}\to\alg{B}$ that preserves all existing meets and joins of $\alg{A}$.

\begin{example}
The varieties $\cls{BA}$ and $\cls{HA}$ of Boolean algebras and Heyting algebras, respectively, are closed under MacNeille  completions and hence admit regular completions. Although these are the only two non-trivial varieties of Heyting algebras closed under MacNeille completions~\cite{BH04}, a broad family of varieties that provide semantics for substructural logics (see Section~\ref{s:substructural}) also have this property~\cite{CGT12}. Moreover, for a still broader family of varieties, including the variety $\cls{GA}$ of G{\"o}del algebras (Heyting algebras that satisfy the prelinearity axiom $(x\to y)\lor(y\to x) \eq 1$), the class of their subdirectly irreducible members is closed under MacNeille completions~\cite{CGT11}. Note, however, that neither the variety $\cls{MV}$ of MV-algebras nor the class of its subdirectly irreducible members admit regular completions~\cite{GP02}.
\end{example}

First-order formulas with propositional connectives in~$\lang$ can be defined for an arbitrary predicate language as usual  (see,~e.g.,~\cite{CN21}*{Section~7.1}). We restrict our attention here, however, to the set $\ofml(\lang)$ of {\em one-variable $\lang$-formulas} $\f,\p,\x,\dots$, built from a countably infinite set of unary predicates $\{P_i\}_{i\in\N}$, a variable $x$, connectives in $\lang$, and quantifiers $\all,\exi$. Given $\f,\p\in\ofml(\lang)$, we refer to $\f\eq\p$ as an {\em $\ofml(\lang)$-equation} and let  $\f\leq\p$ denote $\f\mt\p\eq\f$.\footnote{To avoid confusion, let us emphasize that $\f\eq\p$ is a primitive syntactic object that relates two formulas and not terms. In some settings (e.g., first-order classical or intuitonistic logic), the validity of  $\f\eq\p$ can be expressed using the validity of a formula such as $\f \leftrightarrow \p$ and we may define semantical consequence between formulas, but this is not always the case.}

Let $\alg{A}$ be a complete $\lang$-lattice. An {\em $\alg{A}$-structure} is an ordered pair $\model{S}=\tuple{S, \mathcal{I}}$ such that $S$ is a non-empty set and $\mathcal{I}(P_i)$ is a map from $S$ to $A$ for every $i\in\N$. For each $u\in S$, we define a map $\semvalue{\cdot}^\model{S}_u\colon\ofml(\lang)\to A$ inductively as follows:
\begin{align*}
\semvalue{P_i(x)}^\model{S}_u & = \mathcal{I}(P_i)(u) && i\in\N\\[2pt]
\semvalue{\ops(\f_1,\dots,\f_n)}^\model{S}_u 
& = {\ops}^\alg{A}(\semvalue{\f_1}^\model{S}_u\!,\,\dots ,\,\semvalue{\f_n}^\model{S}_u)&& n\in\N,\,\ops\in\lang_n\\[2pt]
\semvalue{\All{x}\f}^\model{S}_u & = \bigwedge \{\semvalue{\f}^{\model{S}}_{v} \mid v\in S\}\\[2pt]
\semvalue{\Exi{x}\f}^\model{S}_u & = \bigvee \{\semvalue{\f}^{\model{S}}_{v} \mid v\in S\}.
\end{align*}
We say that an $\ofml(\lang)$-equation $\f\eq\p$ is {\em valid} in $\model{S}$, denoted by $\model{S}\models\f\eq\p$, if $\semvalue{\f}^\model{S}_u=\semvalue{\p}^\model{S}_u$ for all $u\in S$. We also say that an $\ofml(\lang)$-equation $\f\eq\p$ is a {\em (sentential) semantical consequence}  of a set of $\ofml(\lang)$-equations $T$ with respect to a class $\K$ of complete $\lang$-lattices, denoted by $T\fosc{\K}\f\eq\p$, if $\model{S}\models\f\eq\p$ for any $\alg{A}\in\K$ and $\alg{A}$-structure  $\model{S}$ satisfying $\model{S}\models\f'\eq\p'$ for all $\f'\eq\p' \in T$.\footnote{These notions can be extended to arbitrary (classes of) $\lang$-lattices by saying that $\semvalue{\All{x}\f}^\model{S}_u$ or $\semvalue{\Exi{x}\f}^\model{S}_u$ is {\em undefined} if the corresponding infimum or supremum fails to exist and that $\model{S}$ is \emph{safe} if $\semvalue{\f}^\model{S}_u$ is defined for all $\f\in\ofml(\lang)$ and $u\in S$. Clearly, if $\K$ is a class of complete $\lang$-lattices, then the two notions of semantical consequence coincide, and if $\K$ admits regular completions, then ${\vDash^\forall_\K} = {\vDash^\forall_{\overline{\K}}}$.}

Now let $\mfml(\lang)$ be the set of propositional formulas $\a,\b,\dots$ constructed using propositional variables $\{p_i\}_{i\in\N}$, connectives in $\lang$, and unary connectives $\bo,\di$, and call $\a\eq\b$ an {\em $\mfml(\lang)$-equation} for any $\a,\b\in\mfml(\lang)$. The standard translation functions $({-})^\ast$ and $({-})^\circ$ between $\ofml(\lang)$ and $\mfml(\lang)$ are defined inductively as follows:
\begin{align*}
(P_i(x))^\ast			&= p_i						 & p_i^\circ		&= P_i(x) \\
(\ops(\f_1,\dots,\f_n))^\ast	&= \ops(\f^\ast_1,\dots,\f^\ast_n)	\ \   & (\ops(\a_1,\dots,\a_n))^\circ		&= \ops(\a^\circ_1,\dots,\a^\circ_n) & \ops\in\lang_n\\
(\All{x} \f)^\ast			&=\bo \f^\ast					& (\bo\a)^\circ	&= \All{x} \a^\circ \\
(\Exi{x} \f)^\ast			&=\di \f^\ast					& (\di\a)^\circ	&= \Exi{x} \a^\circ.
\end{align*}
These translations extend in the obvious way also to (sets of) $\ofml(\lang)$-equations and $\mfml(\lang)$-equations.  

Clearly $(\f^\ast)^\circ = \f$ for any $\f\in\ofml(\lang)$ and $(\a^\circ)^\ast = \a$ for any $\a\in\mfml(\lang)$; hence we can alternate between first-order and modal notations as convenient. In particular, given any class $\K$ of complete $\lang$-lattices, we obtain an equational consequence relation on $\mfml(\lang)$ corresponding to $\fosc{\K}$. Therefore, to find an algebraic semantics for a one-variable lattice-valued logic, we seek a (natural) axiomatization of a variety $\V$  of algebras in the signature of $\mfml(\lang)$ such that $\fosc{\K}$ corresponds, via the above translations, to equational consequence in~$\V$. More precisely, let us call a homomorphism from the formula algebra with universe $\mfml(\lang)$ to $\alg{A}\in\V$ an {\em $\alg{A}$-evaluation}, and define for any set $\Si\cup\{\a\eq\b\}$ of $\mfml(\lang)$-equations,
\begin{align*}
\Si\vDash_{\V} \a\eq\b \: :\Longleftrightarrow \enspace
& \text{$f(\a)=f(\b)$ for every $\alg{A}\in\V$ and $\alg{A}$-evaluation $f$}\\
&   \text{such that $f(\a')=f(\b')$ for all $\a'\eq\b'\in\Si$.}
\end{align*}
Then $\V$ should satisfy for any set of $\ofml(\lang)$-equations $T\cup\{\f\eq\p\}$,
\[
T\fosc{\K}\f\eq\p\enspace \iff \enspace T^\ast\vDash_\V\f^\ast\eq\p^\ast.
\]
In Section~\ref{s:functional}, we solve this problem for the case where $\K$ consists of the complete members of a variety of $\lang$-lattices that admits regular completions and has the superamalgamation property (Corollary~\ref{c:completeness}).

\begin{example}
If $\K$ is $\overline{\cls{BA}}$ or $\overline{\cls{HA}}$, then $\fosc{\K}$ is semantical consequence in the one-variable fragment of first-order classical logic or intuitionistic logic, and corresponds to equational consequence in monadic Boolean algebras~\cite{Hal55} or monadic Heyting algebras~\cites{MV57,Bul66}, respectively. Similarly, if $\K$ is $\overline{\cls{GA}}$, then $\fosc{\K}$ is  semantical consequence in the one-variable fragment of the first-order logic of linear frames~\cite{Cor92}, which corresponds to equational consequence in prelinear monadic Heyting algebras~\cite{CMRT22}. On the other hand, if $\K$ is the class of totally ordered members of $\overline{\cls{GA}}$, then $\fosc{\K}$ is semantical consequence in the one-variable fragment of first-order G{\"o}del logic, the first-order logic of linear frames with constant domains, which corresponds to equational consequence in monadic G{\"o}del algebras, i.e., prelinear monadic Heyting algebras satisfying the constant domain axiom $\bo(\bo x\jn y)\eq\bo x\jn\bo y$~\cite{CR15}. Finally, semantical consequence in the one-variable fragment of first-order \L ukasiewicz logic is obtained by taking $\K$ to be the class of totally ordered members of $\overline{\cls{MV}}$ and corresponds to equational consequence in monadic MV-algebras~\cites{Rut59,dNG04,CCVR20}.
\end{example}


\section{An Algebraic Semantics}\label{s:algebraic}

An \emph{m-lattice} is an algebraic structure $\tuple{L,\mt,\jn,\bo,\di}$ such that $\tuple{L,\mt,\jn}$ is a lattice and the following equations are satisfied:
\[
\begin{array}{r@{\quad}l@{\qquad\qquad}r@{\quad}l}
{\rm (L1_\bo)}	&\bo x \mt x \eq\bo x 		& {\rm (L1_\di)}	 & \di x \jn x \eq\di x\\
{\rm (L2_\bo)}	&\bo(x \mt y) \eq\bo x \mt\bo y	& {\rm (L2_\di)}	 &\di (x\jn y)\eq\di x \jn\di y\\
{\rm (L3_\bo)}	&\bo\di x \eq\di x			& {\rm (L3_\di)}	 &\di\bo x \eq\bo x. 
\end{array}
\]
Recalling that $\a\le\b$ stands for $\a\mt\b\eq\a$ and implies $\a\jn\b\eq\b$ in any lattice, it is easy to check that every m-lattice satisfies the following (quasi-)equations:
\[
\begin{array}{r@{\quad}l@{\qquad\qquad}r@{\quad}l}
{\rm (L4_\bo)}	& \bo\bo x \eq\bo x 	& {\rm (L4_\di)} & \di\di x \eq\di x \\
{\rm (L5_\bo)}	& x \le y \,\Longrightarrow\,\bo x \le\bo y 	& {\rm (L5_\di)} 	& x \le y \,\Longrightarrow\,\di x \le\di y.
\end{array}
\]
Let $\lang$ again be a fixed lattice-oriented signature. An {\em m-$\lang$-lattice} is an algebraic structure $\tuple{\alg{A},\bo,\di}$ such that $\alg{A}$ is an $\lang$-lattice, $\tuple{A,\mt,\jn,\bo,\di}$ is an m-lattice, and the following equation is satisfied for each $\ops\in\lang_n$ ($n\in\N$):
\[
\begin{array}{rl}
(\ops_\bo)	& \quad \bo(\ops(\bo x_1,\dots,\bo x_{n})) \eq \ops(\bo x_1,\dots,\bo x_{n}).
\end{array}
\]
It follows from (\ops$_\bo$), {\rm (L3$_\bo$)}, and {\rm (L3$_\di$)} that $\tuple{\alg{A},\bo,\di}$ also satisfies the following equation for each $\ops\in\lang_n$ ($n\in\N$):
\[
\begin{array}{rl}
(\ops_\di)	& \quad \di(\ops(\di x_1,\dots,\di x_{n})) \eq \ops(\di x_1,\dots,\di x_{n}).
\end{array}
\]
Given any class $\K$ of $\lang$-lattices, we let $\mK$ denote the class of all m-$\lang$-lattices $\tuple{\alg{A},\bo,\di}$ such that $\alg{A}\in\K$. Clearly, if $\K$ is a variety, then also $\mK$ is a variety. 

\begin{example}\label{e:monadicvarieties}
It is easily checked that $\mcls{BA}$ and $\mcls{HA}$ are the varieties of monadic Boolean algebras~\cite{Hal55} and monadic Heyting algebras~\cites{MV57}, respectively. Similarly, $\mcls{GA}$ is the variety of prelinear monadic Heyting algebras~\cite{CMRT22}, while the subvariety of $\mcls{GA}$  satisfying the constant domain equation is the variety of monadic G{\"o}del algebras~\cite{CR15}. However, $\mcls{MV}$ is not the variety of monadic MV-algebras considered in~\cites{Rut59,dNG04,CCVR20}, which satisfy the equation $\di x \pd \di x \eq \di (x \pd x)$. To see this, consider the MV-algebra $\textbf{\L}_3 = \tuple{\{0,\frac12,1\},\mt,\jn,\pd,\to,0,1}$ (in the language of \FLe-algebras) with the usual order, where $x\pd y:=\max(0,x+y-1)$, $x\to y:=\min(1,1-x+y)$. Let $\bo 0 = \bo\frac12= \di 0 = 0$ and $\bo 1=\di\frac12=\di 1 =1$. Then $\tuple{\textbf{\L}_3,\bo,\di}\in\mcls{MV}$, but $\di\frac12\pd\di\frac12= 1\pd 1=1\neq 0=\di 0=\di(\frac12\pd\frac12)$. 
\end{example}

We now establish a useful representation theorem for m-$\lang$-lattices that will be crucial in the proof of the functional representation theorem for certain varieties in the next section.

\begin{lemma}\label{l:subalgebra}
Let $\tuple{\alg{A},\bo,\di}$ be any m-$\lang$-lattice. Then $\bo A:=\{\bo a\mid a\in A\}$ is a subuniverse of $\alg{A}$ satisfying $\bo A = \di A:= \{\di a\mid a\in A\}$ and for any $a\in A$,
\[
\bo a = \max \{b\in\bo A\mid b\le a\} \quad\text{ and }\quad\di a = \min\{b\in\bo A\mid a\le b\}.
\]
We let $\bo\alg{A}$ denote the subalgebra of $\alg{A}$ with universe $\bo A$. 
\end{lemma}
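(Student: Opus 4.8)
The plan is to prove the three assertions in turn, first extracting a fixed-point description of the elements of $\bo A$ that makes the remaining claims routine. I would begin with the equality $\bo A = \di A$, which follows immediately from the interaction equations: by ${\rm (L3_\bo)}$, every $a\in A$ satisfies $\di a = \bo\di a\in\bo A$, so $\di A\subseteq\bo A$, and symmetrically ${\rm (L3_\di)}$ gives $\bo a = \di\bo a\in\di A$, whence $\bo A\subseteq\di A$. The key observation is then that, by the idempotency law ${\rm (L4_\bo)}$, an element $b\in A$ lies in $\bo A$ if and only if $b = \bo b$: if $b = \bo a$ then $\bo b = \bo\bo a = \bo a = b$, and the converse is trivial. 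Using $\bo A = \di A$ together with the dual law ${\rm (L4_\di)}$, the same set is also exactly $\{b\in A\mid b = \di b\}$.

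With this characterization in hand, the subuniverse claim is precisely what the defining equation $(\ops_\bo)$ is designed to deliver. Given $b_1,\dots,b_n\in\bo A$, the fixed-point property yields $b_i = \bo b_i$, so $(\ops_\bo)$ gives $\bo(\ops(b_1,\dots,b_n)) = \ops(\bo b_1,\dots,\bo b_n) = \ops(b_1,\dots,b_n)$; hence $\ops(b_1,\dots,b_n)$ is again a fixed point of $\bo$ and so lies in $\bo A$. Since this argument applies to every $\ops\in\lang_n$ for all $n$ --- in particular to the lattice operations $\mt$ and $\jn$, and, for $n=0$, to the constants of $\lang$ --- the set $\bo A$ is closed under all operations and is therefore a subuniverse.

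Finally, for the order-theoretic characterizations I would show that $\bo a$ both belongs to $\{b\in\bo A\mid b\le a\}$ and is an upper bound of it. Membership follows from ${\rm (L1_\bo)}$, which gives $\bo a\le a$; and for any $b\in\bo A$ with $b\le a$, monotonicity ${\rm (L5_\bo)}$ together with $b = \bo b$ yields $b = \bo b\le\bo a$, so that $\bo a = \max\{b\in\bo A\mid b\le a\}$. The statement for $\di$ is dual: ${\rm (L1_\di)}$ gives $a\le\di a$ with $\di a\in\di A = \bo A$, while for any $b\in\bo A = \di A$ with $a\le b$ we have $\di a\le\di b = b$ by ${\rm (L5_\di)}$, so that $\di a = \min\{b\in\bo A\mid a\le b\}$. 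No step here presents a genuine obstacle; the one point requiring attention is reading membership in $\bo A$ through its $\bo$-fixed-point description or its $\di$-fixed-point description as convenient.
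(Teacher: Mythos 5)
Your proof is correct and takes essentially the same approach as the paper's: the subuniverse claim from $(\ops_\bo)$, the equality $\bo A=\di A$ from (L3$_\bo$) and (L3$_\di$), and the max/min characterizations from (L1$_\bo$), (L4$_\bo$), (L5$_\bo$) and their duals. Your explicit fixed-point description of $\bo A$ (namely $b\in\bo A$ iff $b=\bo b$) simply unpacks the paper's one-line use of $b=\bo b\le\bo a$, so the two arguments coincide in substance.
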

\begin{proof}
The fact that $\bo A$ is a subuniverse of $\alg{A}$ is a direct consequence of (\ops$_\bo$), and the fact that $\bo A =\di A$ follows from {\rm (L3$_\bo$)} and {\rm (L3$_\di$)}. Moreover, if $b\in\bo A$ satisfies $b \le a$, then $b=\bo b\le \bo a$, by {\rm (L4$_\bo$)} and {\rm (L5$_\bo$)}. But also $\bo a\le a$, by {\rm (L1$_\bo$)}, so $\bo a = \max \{b\in\bo A\mid b\le a\}$. Similarly, $\di a = \min\{b\in\bo A\mid a\le b\}$.
\end{proof}

A sublattice $\alg{L}_0$ of a lattice $\alg{L}$ is said to be {\em relatively complete} if for any $a\in L$, the set $\{b\in L_0\mid b\le a\}$ contains a maximum and the set $\{b\in L_0\mid a\le b\}$ contains a minimum. Equivalently, $\alg{L}_0$ is relatively complete if the inclusion map $f_0$ from  $L_0$ to $L$ has left and right adjoints: that is, there exist order-preserving maps $\bo\colon L\to L_0$, $\di\colon L\to L_0$ satisfying for $a\in L$, $b\in L_0$,
\[
f_0(b) \le a \iff b \le\bo a \quad\text{ and }\quad a \le f_0(b) \iff\di a\le b.
\]
For convenience, we also say that a subalgebra $\alg{A}_0$ of an $\lang$-lattice $\alg{A}$ is relatively complete if this is the case for the lattice reducts. By Lemma~\ref{l:subalgebra}, the subalgebra $\bo\alg{A}$ of $\alg{A}$ is relatively complete for any m-$\lang$-lattice  $\tuple{\alg{A},\bo,\di}$. The following result establishes a converse.

\begin{lemma}\label{l:converse}
Let $\alg{A}_0$ be a relatively complete subalgebra of an $\lang$-lattice $\alg{A}$, and define $\bo_0 a := \max \{b\in A_0\mid b\le a\}$ and $\di_0 a := \min\{b\in A_0\mid a\le b\}$ for each $a\in A$. Then $\tuple{\alg{A},\bo_0,\di_0}$ is an m-$\lang$-lattice and $\bo_0 A =\di_0 A = A_0$.
\end{lemma}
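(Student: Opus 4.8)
The plan is to first record the basic adjunction-theoretic properties of $\bo_0$ and $\di_0$ that follow immediately from relative completeness, and then to verify the defining equations of an m-$\lang$-lattice one by one, reducing each to one of these properties together with the fact that $\alg{A}_0$ is a subalgebra of $\alg{A}$, hence closed under all operations in $\lang$, in particular under $\mt$ and $\jn$.

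First I would note that $\bo_0 a$ and $\di_0 a$ are well-defined elements of $A_0$ for every $a\in A$, precisely because $\alg{A}_0$ is relatively complete, and that by construction $\bo_0 a\le a\le\di_0 a$, which already gives ${\rm (L1_\bo)}$ and ${\rm (L1_\di)}$. Both maps are order-preserving: if $a\le a'$ then $\{b\in A_0\mid b\le a\}\subseteq\{b\in A_0\mid b\le a'\}$, so $\bo_0 a\le\bo_0 a'$, and dually for $\di_0$. Moreover each map fixes $A_0$ pointwise, since for $b\in A_0$ the element $b$ is itself the maximum of $\{c\in A_0\mid c\le b\}$ and the minimum of $\{c\in A_0\mid b\le c\}$, giving $\bo_0 b = b = \di_0 b$. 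As $\bo_0 a,\di_0 a\in A_0$ always, this fixing property yields $\bo_0 A=\di_0 A=A_0$, settling the second claim. It also immediately settles ${\rm (L3_\bo)}$ and ${\rm (L3_\di)}$: since $\di_0 a\in A_0$ we get $\bo_0\di_0 a=\di_0 a$, and since $\bo_0 a\in A_0$ we get $\di_0\bo_0 a=\bo_0 a$. Finally, for $(\ops_\bo)$, each $\bo_0 x_i$ lies in $A_0$, so subalgebra closure under $\ops$ places $\ops(\bo_0 x_1,\dots,\bo_0 x_n)$ in $A_0$, whence applying $\bo_0$ fixes it.

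The only equations requiring a genuine two-sided argument are ${\rm (L2_\bo)}$ and ${\rm (L2_\di)}$. For ${\rm (L2_\bo)}$ I would obtain $\bo_0(x\mt y)\le\bo_0 x\mt\bo_0 y$ from monotonicity applied to $x\mt y\le x$ and $x\mt y\le y$, while for the reverse inequality I would use that $\bo_0 x\mt\bo_0 y\in A_0$ (closure under $\mt$) and $\bo_0 x\mt\bo_0 y\le x\mt y$ (from $\bo_0 x\le x$ and $\bo_0 y\le y$), so that $\bo_0 x\mt\bo_0 y$ belongs to $\{b\in A_0\mid b\le x\mt y\}$ and therefore sits below its maximum $\bo_0(x\mt y)$. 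The verification of ${\rm (L2_\di)}$ is order-dual, using closure under $\jn$. Conceptually these two steps just record that the right adjoint $\bo_0$ preserves existing meets and the left adjoint $\di_0$ preserves existing joins.

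I do not anticipate any real obstacle here: the argument is a routine unwinding of the adjunction. The one point deserving care is the ${\rm (L2)}$ cases, where it is essential to invoke subalgebra closure under $\mt$ and $\jn$ to guarantee that $\bo_0 x\mt\bo_0 y$ and $\di_0 x\jn\di_0 y$ land back inside $A_0$; without this the defining maxima and minima could not be compared to them.
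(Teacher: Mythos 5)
Your proposal is correct and follows essentially the same route as the paper's proof: a direct verification of the defining equations from the $\max$/$\min$ definitions, with the subalgebra closure of $A_0$ (under $\mt$, $\jn$, and each $\ops\in\lang_n$) doing the work for the nontrivial direction of ${\rm (L2_\bo)}$, ${\rm (L2_\di)}$, and for $(\ops_\bo)$. The paper merely compresses this into the single displayed chain of equalities for ${\rm (L2_\bo)}$ and the remark that $(\ops_\bo)$ is clear from $\alg{A}_0$ being a subalgebra; your two-sided argument is exactly what justifies the third equality in that chain.
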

\begin{proof}
It is straightforward to check that $\tuple{A,\mt,\jn,\bo_0,\di_0}$ is an m-lattice; for example, it satisfies {\rm (L2$_\bo$)}, since for any $a_1,a_2\in A$,
\begin{align*}
\bo_0 (a_1\mt a_2) 
& = \max \{b\in A_0\mid b\le a_1\mt a_2\}\\[2pt]
& = \max \{b\in A_0\mid b\le a_1 \text{ and } b \le a_2\} \\[2pt]
& = \max \{b\in A_0\mid b\le a_1\} \mt \max \{b\in A_0\mid b\le a_2\} \\[2pt]
& = \bo_0 a_1 \mt \bo_0 a_2.
\end{align*}
Since $\alg{A}_0$  is a subalgebra of $\alg{A}$, clearly $\tuple{\alg{A},\bo_0,\di_0}$ also satisfies (\ops$_\bo$). Hence $\tuple{\alg{A},\bo_0,\di_0}$ is an m-$\lang$-lattice and $\bo_0 A =\di_0 A = A_0$.
\end{proof}

Combining Lemmas~\ref{l:subalgebra} and~\ref{l:converse}, we obtain the following representation theorem for  m-$\lang$-lattices.

\begin{theorem}\label{t:correspondence}
Let $\K$ be any class of $\lang$-lattices. Then there exists a one-to-one correspondence between the members of $\mK$ and ordered pairs $\tuple{\alg{A},\alg{A}_0}$, where $\alg{A}\in\K$ and $\alg{A}_0$ is a relatively complete subalgebra of $\alg{A}$, implemented by the maps $\tuple{\alg{A},\bo,\di}\mapsto\tuple{\alg{A},\bo\alg{A}}$ and $\tuple{\alg{A},\alg{A}_0}\mapsto\tuple{\alg{A},\bo_0,\di_0}$.
\end{theorem}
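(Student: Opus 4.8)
The plan is to verify that the two assignments in the statement are well-defined and then that they are mutually inverse; both facts follow almost immediately from Lemmas~\ref{l:subalgebra} and~\ref{l:converse}, so the real content has already been carried out in those lemmas and only needs to be assembled.

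First I would check that each map lands in its intended target. For the forward map, if $\tuple{\alg{A},\bo,\di}\in\mK$, then $\alg{A}\in\K$ by the definition of $\mK$, and Lemma~\ref{l:subalgebra} ensures that $\bo\alg{A}$ is a subalgebra of $\alg{A}$ which, by the displayed $\max$/$\min$ characterizations, is relatively complete; hence $\tuple{\alg{A},\bo\alg{A}}$ is a legitimate pair. For the backward map, given a pair $\tuple{\alg{A},\alg{A}_0}$ with $\alg{A}\in\K$ and $\alg{A}_0$ relatively complete in $\alg{A}$, Lemma~\ref{l:converse} guarantees that $\tuple{\alg{A},\bo_0,\di_0}$ is an m-$\lang$-lattice, and since its $\lang$-lattice reduct is $\alg{A}\in\K$, it belongs to $\mK$.

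Next I would show that the two composites are the identity. Starting from $\tuple{\alg{A},\bo,\di}\in\mK$, the forward map produces the subalgebra $\bo\alg{A}$, and feeding this into the backward map defines $\bo_0 a=\max\{b\in\bo A\mid b\le a\}$ and $\di_0 a=\min\{b\in\bo A\mid a\le b\}$. But these are exactly the expressions for $\bo a$ and $\di a$ given by Lemma~\ref{l:subalgebra}, so $\bo_0=\bo$ and $\di_0=\di$, and the original m-$\lang$-lattice is recovered. Conversely, starting from a pair $\tuple{\alg{A},\alg{A}_0}$, the backward map yields $\tuple{\alg{A},\bo_0,\di_0}$, and Lemma~\ref{l:converse} tells us that $\bo_0 A=A_0$; the forward map then returns the subalgebra with universe $\bo_0 A=A_0$, namely $\alg{A}_0$ itself, so the pair is recovered.

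The argument has no genuine obstacle, since the hard work is packaged in the two lemmas; the only point demanding care is to observe that both operators $\bo$ and $\bo_0$ (and dually $\di$ and $\di_0$) are defined by the same $\max$ (resp.\ $\min$) formula over the same subuniverse, so that the round trips collapse to identities at the level both of the modal operations and of the subalgebras. I would close by remarking that this is precisely what makes the correspondence genuinely one-to-one: a relatively complete subalgebra determines its adjoint operators uniquely, so neither map can identify distinct objects.
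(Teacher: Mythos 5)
Your proposal is correct and matches the paper's intended argument exactly: the paper gives no separate proof of Theorem~\ref{t:correspondence}, stating only that it follows by ``combining Lemmas~\ref{l:subalgebra} and~\ref{l:converse},'' and your write-up is precisely that combination spelled out --- well-definedness of both maps from the two lemmas, plus the observation that the $\max$/$\min$ characterizations force $\bo_0=\bo$, $\di_0=\di$ in one direction and $\bo_0 A=A_0$ in the other. Nothing is missing.
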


We now show that m-$\lang$-lattices encompass the algebraic semantics of the one-variable lattice-valued logics defined in Section~\ref{s:one-variable}.

\begin{proposition}\label{p:standard}
Let $\alg{A}$ be any complete $\lang$-lattice and let $W$ be any set. Then $\tuple{\alg{A}^W,\bo,\di}$ is an m-$\lang$-lattice, where the operations of $\alg{A}^W$ are defined pointwise and for each $f\in A^W$ and $u\in W$, 
\[
\bo f(u) = \bigwedge_{v\in W}f(v)\quad\text{ and }\quad\di f(u) = \bigvee_{v\in W}f(v).
\]
Moreover, if $\alg{A}\in\V$ for some variety $\V$ of $\lang$-lattices, then $\tuple{\alg{A}^W,\bo,\di}\in\mV$.
\end{proposition}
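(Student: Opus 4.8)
The plan is to avoid verifying the m-lattice equations one by one and instead invoke Lemma~\ref{l:converse}. Assume first that $W\neq\emptyset$; the case $W=\emptyset$ yields the trivial one-element algebra, in which all equations hold vacuously. For $a\in A$, write $\hat{a}\in A^W$ for the constant function with value $a$, and set $A_0:=\{\hat{a}\mid a\in A\}$. The strategy is to show that $A_0$ is the universe of a relatively complete subalgebra $\alg{A}_0$ of $\alg{A}^W$ whose induced operations $\bo_0,\di_0$ from Lemma~\ref{l:converse} coincide with the maps $\bo,\di$ of the statement; the first assertion then follows at once from that lemma.

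First I would check that $A_0$ is a subuniverse. As the operations of $\alg{A}^W$ are pointwise, for any $\ops\in\lang_n$ and $a_1,\dots,a_n\in A$ we have $\ops^{\alg{A}^W}(\hat{a}_1,\dots,\hat{a}_n)=\widehat{\ops^{\alg{A}}(a_1,\dots,a_n)}\in A_0$, so in fact $a\mapsto\hat{a}$ is an embedding of $\alg{A}$ into $\alg{A}^W$ with image $\alg{A}_0$. Next, using completeness of $\alg{A}$ to guarantee that $\bigwedge_{v\in W}f(v)$ exists, I would observe that for $f\in A^W$ and $a\in A$ the pointwise inequality $\hat{a}\le f$ holds if and only if $a\le\bigwedge_{v\in W}f(v)$; hence $\max\{g\in A_0\mid g\le f\}$ exists and equals $\bo f$. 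The dual computation with suprema gives $\min\{g\in A_0\mid f\le g\}=\di f$. Thus $\alg{A}_0$ is relatively complete and its adjoints $\bo_0,\di_0$ are precisely $\bo,\di$, so Lemma~\ref{l:converse} yields that $\tuple{\alg{A}^W,\bo,\di}$ is an m-$\lang$-lattice.

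For the ``moreover'' part, I would use that a variety $\V$ of $\lang$-lattices is closed under direct products, so $\alg{A}\in\V$ gives $\alg{A}^W\in\V$; since the $\lang$-lattice reduct of $\tuple{\alg{A}^W,\bo,\di}$ is $\alg{A}^W$, membership in $\mV$ is immediate from the definition of $\mV$. I expect no serious obstacle here: the only points needing care are the well-definedness of $\bo f$ and $\di f$, secured by completeness of $\alg{A}$, and the bookkeeping of the degenerate case $W=\emptyset$. The real content is the brief adjunction computation identifying $\bo$ and $\di$ with the maps supplied by Lemma~\ref{l:converse}.
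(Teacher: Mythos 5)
Your proof is correct, and it takes a genuinely different route from the paper. The paper verifies the defining equations directly: it checks a sample m-lattice equation such as {\rm (L2$_\bo$)} by a pointwise infimum computation, and proves $(\ops_\bo)$ by observing that each $\bo f_i$ is a constant function, so the pointwise infimum of $\ops(\bo f_1,\dots,\bo f_n)$ collapses. You instead factor the whole verification through Lemma~\ref{l:converse}, exhibiting the diagonal $A_0=\{\hat a\mid a\in A\}$ of constant functions as a subuniverse of $\alg{A}^W$ (closed under the pointwise operations since $\ops(\hat a_1,\dots,\hat a_n)=\widehat{\ops(a_1,\dots,a_n)}$) and reducing everything to the single adjunction computation $\hat a\le f \iff a\le\bigwedge_{v\in W}f(v)$, which identifies $\bo f$ with $\max\{g\in A_0\mid g\le f\}$ and dually $\di f$ with the corresponding minimum; completeness of $\alg{A}$ is used exactly where it should be, to guarantee these infima and suprema exist, and there is no circularity since Lemma~\ref{l:converse} precedes the proposition in the paper. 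Your approach buys brevity and a conceptual dividend: it makes explicit that the pair associated to $\tuple{\alg{A}^W,\bo,\di}$ under Theorem~\ref{t:correspondence} is $\tuple{\alg{A}^W,\hat{\alg{A}}}$ with the diagonal copy of $\alg{A}$ as the relatively complete subalgebra, i.e., $\bo(\alg{A}^W)\cong\alg{A}$. The paper's direct computation, by contrast, is self-contained and shows concretely how constancy of $\bo f$ drives $(\ops_\bo)$, which mirrors the semantic clauses for $\All{x}$ and $\Exi{x}$. Your handling of $W=\emptyset$ (trivial algebra) is careful and correct --- note that there the map $a\mapsto\hat a$ fails to be injective, so separating that case is genuinely needed for your "embedding" claim, though Lemma~\ref{l:converse} itself only requires $A_0$ to be a subuniverse --- and the "moreover" part via closure of varieties under direct products coincides with the paper's argument.
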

\begin{proof}
Since $\alg{A}$ is an $\lang$-lattice, $\alg{A}^W$, with operations defined pointwise, is also an $\lang$-lattice. It is also easy to check that $\tuple{A^W,\mt,\jn,\bo,\di}$ is an m-lattice; for example, $\tuple{A^W,\mt,\jn,\bo,\di}$ satisfies {\rm (L2$_\bo$)}, since for any $f,g\in A^W$ and $u\in W$,
\[
\bo(f\mt g)(u) 
= \bigwedge_{v\in W}(f\mt g)(v)
= (\bigwedge_{v\in W}f(v))\mt (\bigwedge_{v\in W}g(v))
 = (\bo f \mt \bo g)(u).
\]
Moreover, for any $\ops\in\lang_n$ ($n\in\N$), $f_1,\dots,f_n\in A^W$, and $u\in W$,
\begin{align*}
\bo(\mathop{\ops}(\bo f_1,\dots,\bo f_{n}))(u)
 & = \bigwedge_{v\in W}\ops(\bo f_1,\dots,\bo f_{n})(v)\\[2pt]
 & = \bigwedge_{v\in W}\ops(\bo f_1(v),\dots,\bo f_{n}(v))\\[2pt]
 & = \ops(\bo f_1(u),\dots,\bo f_{n}(u))\\[3pt]
  & = \ops(\bo f_1,\dots,\bo f_n)(u),
 \end{align*}
noting that in the last-but-one equality we have used the fact that $\bo f_i(v) = \bo f_i(u)$ for each $v \in W$ and $i\in\{1,\dots,n\}$. Hence $\tuple{\alg{A}^W,\bo,\di}$ satisfies (\ops$_\bo$). Finally, if $\alg{A}\in\V$ for some variety $\V$ of $\lang$-lattices, then, since varieties are closed under taking direct products, also $\alg{A}^W\in\V$, and so $\tuple{\alg{A}^W,\bo,\di}\in\mV$.
\end{proof}

Let us call an m-$\lang$-lattice $\tuple{\alg{A}^W,\bo,\di}$ defined as described in Proposition~\ref{p:standard} {\em full functional},  and say that an m-$\lang$-lattice is {\em functional} if it is isomorphic to a subalgebra of a full functional m-$\lang$-lattice. The following identification of  the semantics of one-variable lattice-valued logics with evaluations into full functional m-$\lang$-lattices is a direct consequence of  Proposition~\ref{p:standard}.

\pagebreak

\begin{corollary}
Let $\alg{A}$ be any complete $\lang$-lattice. 
\begin{enumerate}
\item[\rm (a)]
Given any $\alg{A}$-structure $\model{S}=\tuple{S, \mathcal{I}}$, the evaluation $f$ for the  full functional m-$\lang$-lattice $\tuple{\alg{A}^S,\bo,\di}$ defined by $f(p_i) := \mathcal{I}(P_i)$ for each $i\in\N$ satisfies for all $\f,\p\in \ofml(\lang)$ and $u\in S$,
\[
f(\f^\ast)(u)= \semvalue{\f}^\model{S}_u \quad\text{ and }\:\quad \model{S}\models \f\eq \p \iff f(\f^\ast) = f(\p^\ast).
\]

\item[\rm (b)] 
Given any evaluation $g$ for a full functional m-$\lang$-lattice $\tuple{\alg{A}^W,\bo,\di}$, the $\alg{A}$-structure $\model{W}=\tuple{W, \mathcal{J}}$, where $\mathcal{J}(P_i):=g(p_i)$ for each $i\in\N$,  satisfies for all $\f,\p\in \ofml(\lang)$ and $u\in W$,
\[
g(\f^\ast)(u)  = \semvalue{\f}^\model{W}_u \quad\text{ and }\:\quad \model{W}\models \f\eq \p \iff g(\f^\ast) = g(\p^\ast).
\]
\end{enumerate}
\end{corollary}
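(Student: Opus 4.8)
The plan is to prove part~(a) by establishing the pointwise identity $f(\f^\ast)(u) = \semvalue{\f}^\model{S}_u$ for all $\f\in\ofml(\lang)$ and $u\in S$, and then to read off both the displayed equation and the claimed equivalence as immediate consequences. Part~(b) will then follow from part~(a) by appealing to the uniqueness of homomorphic extensions. The only external input needed is Proposition~\ref{p:standard}, which guarantees that $\tuple{\alg{A}^S,\bo,\di}$ is indeed a full functional m-$\lang$-lattice with the stated modal operators.

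First I would prove the identity in~(a) by induction on the structure of $\f$. The base case $\f = P_i(x)$ is handled directly by the definition of $f$, since $f(p_i) = \mathcal{I}(P_i)$ gives $f(\f^\ast)(u) = \mathcal{I}(P_i)(u) = \semvalue{P_i(x)}^\model{S}_u$. For a connective $\ops\in\lang_n$, I would use that $f$ is a homomorphism and that the operations of $\alg{A}^S$ are defined pointwise, so that $f(\ops(\f_1^\ast,\dots,\f_n^\ast))(u) = {\ops}^\alg{A}(f(\f_1^\ast)(u),\dots,f(\f_n^\ast)(u))$, whereupon the inductive hypothesis matches this to the corresponding clause defining $\semvalue{\cdot}^\model{S}_u$. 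The two remaining cases are the modal ones, which are the crux of the argument. For $\f = \All{x}\p$, the fact that $f$ is a homomorphism yields $f(\bo\p^\ast) = \bo(f(\p^\ast))$, where $\bo$ on the right is the modal operator of the full functional m-$\lang$-lattice; unwinding its definition from Proposition~\ref{p:standard} gives $f(\f^\ast)(u) = \bigwedge_{v\in S} f(\p^\ast)(v)$, which the inductive hypothesis identifies with $\bigwedge_{v\in S}\semvalue{\p}^\model{S}_v = \semvalue{\All{x}\p}^\model{S}_u$. The case $\f = \Exi{x}\p$ is symmetric, using $\di$ and $\bigvee$.

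Having established the identity, the displayed equation in~(a) is exactly its statement, and the equivalence follows by unwinding the definitions: $\model{S}\models\f\eq\p$ means $\semvalue{\f}^\model{S}_u = \semvalue{\p}^\model{S}_u$ for all $u\in S$, which by the identity is the same as $f(\f^\ast)(u) = f(\p^\ast)(u)$ for all $u\in S$, that is, equality of $f(\f^\ast)$ and $f(\p^\ast)$ as elements of $A^S$. For part~(b), I would observe that the $\alg{A}$-structure $\model{W}$ is defined precisely so that $\mathcal{J}(P_i) = g(p_i)$; applying part~(a) to $\model{W}$ produces the evaluation taking each $p_i$ to $\mathcal{J}(P_i) = g(p_i)$, and since an evaluation is a homomorphism from the (freely generated) formula algebra it is determined uniquely by its values on the propositional variables, so this evaluation coincides with $g$. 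The conclusion for~(b) is then inherited verbatim from~(a).

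The main obstacle is less a difficulty than a matter of care: in the modal cases one must correctly align the homomorphism property of the evaluation, which pushes $\bo$ and $\di$ inside $f$, with the global infimum and supremum definition of $\bo$ and $\di$ in the full functional m-$\lang$-lattice, and note that this definition is constant across worlds, so that the value at $u$ agrees with the quantifier clause ranging over all $v\in S$. Every other case is a routine unfolding of the translation $({-})^\ast$ against the pointwise operations of $\alg{A}^S$.
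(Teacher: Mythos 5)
Your proof is correct and is exactly the argument the paper has in mind: the paper states this corollary without proof as ``a direct consequence of Proposition~\ref{p:standard},'' and your induction on the structure of $\f$ (with the modal cases resolved by matching the homomorphism property against the constant, world-independent definition of $\bo$ and $\di$ in $\tuple{\alg{A}^S,\bo,\di}$), together with deriving (b) from (a) via uniqueness of homomorphisms on the free formula algebra, is the intended filling-in of that claim.
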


\begin{corollary}\label{c:soundness2}
For any variety $\V$ of $\lang$-lattices and set of $\ofml(\lang)$-equations $T\cup\{\f\eq\p\}$,
\[
T^\ast\vDash_{\mV}\f^\ast\eq\p^\ast \quad\Longrightarrow\quad T\fosc{\overline{\V}}\f\eq\p.
\]
Moreover, the converse also holds if every member of $\mV$ is functional. 
\end{corollary}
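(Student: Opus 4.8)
The plan is to prove both implications by passing between $\alg{A}$-structures and evaluations into full functional m-$\lang$-lattices, using Proposition~\ref{p:standard} together with parts~(a) and~(b) of the corollary preceding this one. The forward implication will be unconditional, while the converse will invoke the functionality hypothesis.

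For the forward implication, I would argue directly. Fix $\alg{A}\in\overline{\V}$ and an $\alg{A}$-structure $\model{S}=\tuple{S,\mathcal{I}}$ validating every equation of $T$. By Proposition~\ref{p:standard}, the full functional m-$\lang$-lattice $\tuple{\alg{A}^S,\bo,\di}$ lies in $\mV$; and part~(a) yields the evaluation $f$ determined by $f(p_i)=\mathcal{I}(P_i)$, for which $\model{S}\models\f'\eq\p'$ iff $f((\f')^\ast)=f((\p')^\ast)$ for all $\f',\p'\in\ofml(\lang)$. Since $\model{S}$ validates $T$, we obtain $f(\a')=f(\b')$ for every $\a'\eq\b'\in T^\ast$, so the hypothesis $T^\ast\vDash_{\mV}\f^\ast\eq\p^\ast$ gives $f(\f^\ast)=f(\p^\ast)$, and part~(a) returns $\model{S}\models\f\eq\p$. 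As $\alg{A}$ and $\model{S}$ were arbitrary, $T\fosc{\overline{\V}}\f\eq\p$.

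For the converse, assume every member of $\mV$ is functional and that $T\fosc{\overline{\V}}\f\eq\p$. Fix $\alg{B}\in\mV$ and a $\alg{B}$-evaluation $h$ with $h(\a')=h(\b')$ for all $\a'\eq\b'\in T^\ast$; the goal is $h(\f^\ast)=h(\p^\ast)$. By functionality there is an embedding $\iota$ of $\alg{B}$ into a full functional m-$\lang$-lattice $\tuple{\alg{A}^W,\bo,\di}$, so that $g:=\iota\circ h$ is again an evaluation, now for $\tuple{\alg{A}^W,\bo,\di}$. Part~(b) supplies an $\alg{A}$-structure $\model{W}$ for which $\model{W}\models\f'\eq\p'$ iff $g((\f')^\ast)=g((\p')^\ast)$. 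Since $\iota$ is injective, $g$ identifies exactly the same pairs of formulas as $h$; hence $\model{W}$ validates $T$, and $T\fosc{\overline{\V}}\f\eq\p$ yields $\model{W}\models\f\eq\p$, i.e.\ $g(\f^\ast)=g(\p^\ast)$, whence injectivity of $\iota$ gives $h(\f^\ast)=h(\p^\ast)$.

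The delicate point, and the main obstacle, is that invoking $T\fosc{\overline{\V}}\f\eq\p$ for $\model{W}$ requires the ambient complete $\lang$-lattice $\alg{A}$ to belong to $\overline{\V}$, not merely to be complete: the literal definition of \emph{functional} only guarantees a complete $\alg{A}$. I would therefore make explicit that, for $\alg{B}\in\mV$, functionality is to be read as providing an embedding into some $\tuple{\alg{A}^W,\bo,\di}$ with $\alg{A}\in\overline{\V}$, which is exactly the form in which the functional representation theorem of the next section delivers such embeddings. Granting this, the chain of equivalences above closes, and the only remaining checks, namely that $\iota\circ h$ is an evaluation and that an embedding reflects as well as preserves equalities, are routine.
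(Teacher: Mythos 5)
Your proof is correct and is exactly the argument the paper intends: Corollary~\ref{c:soundness2} is presented as a direct consequence of Proposition~\ref{p:standard} and parts~(a) and~(b) of the preceding corollary, used precisely as you use them (part~(a) for the unconditional direction, part~(b) plus an embedding into a full functional algebra for the converse). The subtlety you flag is genuine --- the literal definition of \emph{functional} only guarantees a complete ambient $\lang$-lattice --- but your resolution agrees with the paper's usage, since the proof of Theorem~\ref{t:functional} embeds each member of $\mK$ into a power of an algebra $\overline{\alg{L}}$ belonging to $\overline{\K}$, which is the form needed to invoke $\fosc{\overline{\V}}$.
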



\section{A Functional Representation Theorem}\label{s:functional}

In this section, we establish a functional representation theorem for $\mK$ when $\K$ is a class of $\lang$-lattices satisfying certain conditions, following very closely a proof of the same result for monadic Heyting algebras~\cite{BH02}*{Theorem~3.6}. 

Let $\K$ be any class of $\lang$-lattices. A {\em V-formation} in $\K$ is a $5$-tuple $\tuple{\alg{A},\alg{B}_1,\alg{B}_2,f_1,f_2}$ consisting of $\alg{A},\alg{B}_1,\alg{B}_2\in \K$ and embeddings $f_1\colon\alg{A}\to\alg{B}_1$, $f_2\colon\alg{A}\to\alg{B}_2$. An {\em amalgam} in $\K$ of this V-formation  is a triple $\tuple{\alg{C},g_1,g_2}$ consisting of $\alg{C}\in\K$ and embeddings $g_1\colon\alg{B}_1\to\alg{C}$, $g_2\colon\alg{B}_2\to\alg{C}$ such that $g_1\circ f_1 = g_2\circ f_2$. It is called a {\em superamalgam} if for any $b_1\in B_1$, $b_2\in B_2$ and distinct $i,j\in\{1,2\}$ such that $g_i(b_i)\le g_j(b_j)$, there exists an $a\in A$ such that $g_i(b_i)\le g_i\circ f_i(a) = g_j\circ f_j(a)  \le g_j(b_j)$. The class $\K$ is said to have the {\em superamalgamation property} if any V-formation in $\K$ has a superamalgam in~$\K$.
 
\begin{theorem}\label{t:functional}
Let $\K$ be a class of $\lang$-lattices that is closed under subalgebras and direct limits, admits regular completions, and has the superamalgamation property. Then any member of $\mK$ is functional.
\end{theorem}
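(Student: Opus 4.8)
\medskip

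The plan is to show that any $\tuple{\alg{A},\bo,\di}\in\mK$ embeds into a full functional m-$\lang$-lattice $\tuple{\alg{C}^W,\bo,\di}$ for a suitable $\alg{C}\in\overline{\K}$ and index set $W$. By Theorem~\ref{t:correspondence} (used in the reverse direction via Lemma~\ref{l:subalgebra}), the monadic structure is determined entirely by the relatively complete subalgebra $\bo\alg{A}$, with $\bo a=\max\{b\in\bo A\mid b\le a\}$ and $\di a=\min\{b\in\bo A\mid a\le b\}$. So it suffices to produce an $\lang$-lattice embedding $h\colon\alg{A}\to\alg{C}^W$ that (i) maps $\bo\alg{A}$ into the constant functions and (ii) respects the adjunction, i.e.\ translates the ``$\max$/$\min$ over $\bo A$'' definition of $\bo,\di$ into the ``$\bigwedge/\bigvee$ over $W$'' definition in $\alg{C}^W$; conditions (i)--(ii) together force $h$ to commute with $\bo$ and $\di$.

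\medskip

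First I would take the index set $W$ to be the collection of all $\bo\alg{A}$-homomorphisms witnessing the relevant elements — concretely, following Bezhanishvili--Harding, each point $w\in W$ should be (the image of) an embedding of $\alg{A}$ into some member of $\overline{\K}$ that is the identity on $\bo\alg{A}$, together with enough such embeddings to separate points. The map $h$ sends $a\in A$ to the function $w\mapsto w(a)$. For this to land in a \emph{single} power $\alg{C}^W$, I would amalgamate all these codomains into one complete $\alg{C}$; closure under direct limits and the existence of regular completions are exactly what let me build such a $\alg{C}\in\overline{\K}$ as a completion of a suitable direct limit. Injectivity of $h$ follows once $W$ contains, for each pair $a\ne a'$, a point separating them, which the adjunction makes possible (if $a\not\le a'$ then $\bo\alg{A}$ already sees a witness, or a fresh separating embedding is produced).

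\medskip

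The main work, and the step I expect to be the central obstacle, is verifying condition (ii): that $\bo a$ computed inside $\alg{A}$ equals $\bigwedge_{w\in W}w(a)=\bigwedge_{w\in W}h(a)(w)$ computed inside $\alg{C}$. The inequality $\bo a\le w(a)$ for every $w$ is immediate since each $w$ fixes $\bo\alg{A}$ and $\bo a\le a$. The reverse direction — that no element of $\bo A$ strictly between $\bo a$ and $a$ survives, i.e.\ that the infimum is \emph{attained} at $\bo a$ and not lower — is where the \emph{superamalgamation} property is essential. Given a putative lower bound $b\in\bo A$ with $\bo a<b$, superamalgamation of a V-formation built over $\bo\alg{A}$ is used to construct an embedding $w$ with $w(a)\not\ge b$, contradicting that $b$ bounds all $w(a)$ from below; the super-amalgam's defining property (that any inequality $g_i(b_i)\le g_j(b_j)$ across the two factors is mediated by an element of the shared subalgebra $\bo\alg{A}$) is precisely what guarantees the separating point exists. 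The case for $\di$ is dual. Finally, since $h$ is an $\lang$-lattice embedding sending $\bo\alg{A}$ to constants and realizing $\bo,\di$ as the pointwise $\bigwedge,\bigvee$, it is an m-$\lang$-lattice embedding into the full functional m-$\lang$-lattice $\tuple{\alg{C}^W,\bo,\di}$, so $\tuple{\alg{A},\bo,\di}$ is functional.
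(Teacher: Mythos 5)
Your overall architecture is the right one and matches the paper's in spirit (points are copies of $\alg{A}$ amalgamated over $\bo\alg{A}$, glued together by direct limits and a regular completion), but the central verification -- the step you yourself flag as the main obstacle -- has a genuine gap. You check that the image of $\bo a$ is the infimum of $\{w(a)\mid w\in W\}$ only against lower bounds $b\in\bo A$. That case is essentially vacuous: by Lemma~\ref{l:subalgebra} there is no $b\in\bo A$ with $\bo a<b\le a$ at all, and if the canonical point (an embedding of $\alg{A}$ itself, fixing $\bo A$) belongs to $W$, then any $b\in\bo A$ lying below every $w(a)$ already satisfies $b\le a$ in $\alg{A}$, hence $b\le\bo a$, with no superamalgamation needed. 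The real difficulty is that the meet is computed inside the complete algebra $\alg{C}$, most of whose elements lie outside $\bo A$ and outside the canonical copy of $A$: one must force an \emph{arbitrary} $c\in C$ with $c\le w(a)$ for all $w\in W$ down below $\bo a$. This is exactly where the paper's specific construction earns its keep: any lower bound in the direct limit $\alg{L}$ is represented at a finite stage, $c=l_k(d)$ with $d\in\alg{A}_k$, and because stage $k+1$ is \emph{by construction} a superamalgam of the V-formation $\tuple{\bo\alg{A},\alg{A}_k,\alg{A},f_k,f_0}$, the inequality $s_{k+1}(d)\le g_{k+1}(a)$ is mediated by some $b\in\bo A$ with $d\le f_k(b)$ and $b\le a$, whence $b\le\bo a$ and $c\le l_k\circ g_k(\bo a)$. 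Your $W$ -- ``all embeddings of $\alg{A}$ into members of $\overline{\K}$ fixing $\bo\alg{A}$,'' amalgamated in some unspecified fashion -- does not guarantee that an arbitrary element of $\alg{C}$ and a given point $w$ sit jointly inside a superamalgam over $\bo\alg{A}$, so the mediation step you invoke cannot actually be run.

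Two further corrections. First, regularity of the completion is needed not merely to produce a complete $\alg{C}\in\overline{\K}$, as you use it, but to \emph{preserve the meet}: the paper first shows that $\bigwedge\{l_j\circ g_j(a)\mid j\in W\}$ exists in the uncompleted limit $\alg{L}$ and is attained at $l_i\circ g_i(\bo a)$, and only then transfers this to $\overline{\alg{L}}$ because the completion embedding preserves all existing meets and joins; without that, completing could insert new elements strictly between $\bo a$ and the $w(a)$'s. Second, your proper-class worry and the point-separation argument for injectivity are unnecessary: countably many points suffice (the paper takes $W=\N^{>0}$, iteratively superamalgamating a fresh copy of $\alg{A}$ with the previous stage over $\bo\alg{A}$ along a chain), and the map is injective simply because each coordinate map $h\circ l_i\circ g_i$ is an embedding. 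Replacing your unstructured family of embeddings by this chain of superamalgams, and adding the limit-then-regular-completion argument, repairs the proof and recovers the paper's.
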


\begin{proof}
Consider any $\tuple{\alg{A},\bo,\di}\in\mK$. Then $\alg{A}\in\K$ and, since $\K$ is closed under subalgebras, also $\bo\alg{A}\in\K$. We let $W$ denote the set of positive natural numbers and define inductively a sequence of $\lang$-lattices $\tuple{\alg{A}_i}_{i\in W}$ in $\K$ and sequences of embeddings $\tuple{f_i\colon \bo\alg{A} \to \alg{A}_i}_{i\in W}$, $\tuple{g_i\colon \alg{A} \to \alg{A}_{i}}_{i\in W}$, and $\tuple{s_i\colon \alg{A}_{i-1} \to \alg{A}_{i}}_{i\in W}$. Let $\alg{A}_0 := \alg{A}$. By assumption, there exists for each $i\in W$, a superamalgam $\tuple{\alg{A}_i, s_i, g_i}$ of the V-formation $\tuple{\bo\alg{A}, \alg{A}_{i-1},\alg{A}, f_{i-1}, f_0}$, where  $f_0\colon\bo A\to A$  is the inclusion map and $f_i := s_i\circ f_{i-1} = g_{i}\circ f_0  = g_{i}|_{\bo A}$.

Now let $\alg{L}$ be the direct limit of the system $\tuple{\tuple{\alg{A}_i, s_{i+1}}}_{i\in W}$ with associated embeddings
$\tuple{l_i\colon \alg{A}_i \to \alg{L}}_{i\in W}$. Then, by assumption, $\alg{L}\in\K$, and there exist a complete $\lang$-lattice $\alg{\overline{L}}\in\K$ and an embedding $h\colon\alg{L}\to\alg{\overline{L}}$ that preserves all existing meets and joins of $\alg{L}$. We depict the first two amalgamation steps of this construction in the following diagram:

\begin{center}
\begin{tikzpicture}
\coordinate[label=below:$\Box\mathbf{A}$](Box A1) at (0,0.5);
\coordinate[label=below:$\mathbf{A}$](Box A2) at (2.5,0.5);
\coordinate[label=below:$\mathbf{A}$](A) at (0,-1.6);
\coordinate[label=below:$\mathbf{A}_1$](A1) at (2.6,-1.6);
\coordinate[label=below:$\mathbf{A}_2$](A2) at (5.2,-1.6);
\coordinate[label=below:$\mathbf{A}_3$](A3) at (7.7,-1.6);
\coordinate[label=below:$\mathbf{L}$] (L) at (5.2,-3.7);
\coordinate[label=below:$\mathbf{\overline{L}}$] (CL) at (7.7,-3.67);
\coordinate[label=below:$\cdots$] (dots) at (8.5,-1.7);
\draw[->] (0,-0.2) -- (0,-1.47) node[midway, right]{$f_0$};
\draw[->] (0.5,0.2) -- (2.1,0.2) node[midway, above]{$f_0$};
\draw[->] (0.5,-0.2) -- (2.1,-1.47) node[midway, above right]{$f_1$};
\draw[->] (3,0.2) -- (7.2,-1.47) node[midway, above right]{$g_3$};
\draw[->] (3,-0.2) -- (4.7,-1.47) node[midway, above right]{$g_2$};
\draw[->] (2.5,-0.2) -- (2.5,-1.47) node[midway, right]{$g_1$};
\draw[->] (0.5,-1.88) -- (2.1,-1.88) node[midway, above]{$s_1$};
\draw[->] (3,-1.88) -- (4.7,-1.88) node[midway, above]{$s_2$};
\draw[->] (5.6,-1.88) -- (7.2,-1.88) node[midway, above]{$s_3$};
\draw[->] (3,-2.25) -- (4.85,-3.65) node[midway, above right]{$l_1$};
\draw[->] (5.2,-2.25) -- (5.2,-3.65) node[midway, right]{$l_2$};
\draw[->] (7.7,-2.25) -- (5.6,-3.65) node[midway, above left]{$l_3$};
\draw[->] (5.6,-4.0) -- (7.2,-4.0) node[midway, above]{$h$};
\end{tikzpicture}
\end{center}
To show that $\tuple{\alg{A},\bo,\di}$ is functional, it suffices to prove that the following map into the full functional m-$\lang$-lattice $\tuple{\alg{\overline{L}}^W, \bo,\di}$ is an embedding:
$$
f\colon \tuple{\alg{A},\bo,\di} \to \tuple{\alg{\overline{L}}^W, \bo,\di}; \quad a\mapsto \tuple{h\circ l_i\circ g_{i}(a)}_{i\in W}.
$$
It is easy to see that $f$ is an embedding  of $\lang$-lattices, so it remains to show that $f(\bo a)  = \bo f(a)$ and $f(\di a)  = \di f(a)$ for all $a\in A$. Fix some $a\in A$. Considering just the case of $\bo$ (since the case of $\di$ is analogous), we obtain
\begin{align*}
f(\bo a) & = \tuple{h\circ l_i\circ g_i(\bo a)}_{i\in W}\\[2pt]
&\mathbin{\stackrel{(1)}{=}} \tuple{h(\bigwedge\limits_{j\in W} l_j\circ g_j(a))}_{i\in W} \\
& \mathbin{\stackrel{(2)}{=}} \tuple{\bigwedge\limits_{j\in W} h\circ l_j\circ g_j(a) }_{i\in W}\\[2pt]
& = \bo \tuple{h\circ l_i\circ g_i(a)}_{i\in W}\\[2pt]
& =  \bo f(a).
\end{align*}
To justify (1),  it suffices to fix an $i\in W$ and show that  $l_i\circ g_i(\bo a)$ is the greatest lower bound of the set $S := \{ l_j\circ g_j(a) \mid j\in W\}$. This implies that $S$ has an infimum and so also (2) follows by the definition of regular completions. 

We start by showing that $l_i\circ g_i(\bo a) = l_j\circ g_j(\bo a)$ for all $j\in W$. Clearly, this follows from the fact that for any $k\in W$,
$$
l_k\circ g_k(\bo a) = l_k\circ f_k(\bo a) = l_{k+1} \circ s_{k+1} \circ f_k(\bo a) =   l_{k+1} \circ g_{k+1} (\bo a),
$$
where the first and the last equality are due to the definition of $f_k$ and the second is due to the definition of direct limits. Using this fact, we can easily show that $l_i\circ g_i(\bo a)$ is a lower bound of $S$: just observe that since $\bo a \leq a$, for each $j\in W$,
\[
l_i\circ g_i(\bo a)  = l_j\circ g_j(\bo a)  \leq l_j\circ g_j( a).
\]
 Finally, suppose that $c\in L$ is any lower bound of $S$. By the definition of direct limits, there exist a $k\in W$ and a $d\in A_k$ such that
 \[
  l_{k+1} \circ s_{k+1} (d) =  l_k (d)  = c \le  l_{k+1} \circ g_{k+1} (a).
 \]
Moreover, since $l_{k+1}$ is an embedding, $s_{k+1} (d) \leq g_{k+1}(a)$.  Therefore, because $\tuple{\alg{A}_{k+1}, s_{k+1}, g_{k+1}}$ is a superamalgam of the V-formation $\tuple{\bo\alg{A}, \alg{A}_k,\alg{A}, f_k, f_0}$, there exists a $b\in \bo A$ such that
$$
s_{k+1} (d) \leq s_{k+1} \circ f_k(b) =  g_{k+1}\circ f_0(b) \leq g_{k+1}(a).
$$
Since $s_{k+1}$ and $g_{k+1}$ are embeddings and $f_0$ is the inclusion map, $d \leq f_k(b)$ and $b\leq a$. The latter inequality together with $b\in \bo A$ entails that $b\leq \bo a$. Hence also $f_k(b) \leq f_k(\bo a) = g_k(\bo a)$ and, using the first inequality, 
\begin{align*}
c & = l_k(d) \le l_k\circ f_k(b) \leq l_k\circ g_k(\bo a)  = l_i\circ g_i(\bo a). 
\end{align*}
So $l_i\circ g_i(\bo a)$ is the greatest lower bound of the set $S$ as required.
\end{proof}

Combining Theorem~\ref{t:functional} with Corollary~\ref{c:soundness2} yields the following result.

\begin{corollary}\label{c:completeness}
Let $\V$ be a variety of $\lang$-lattices that admits regular completions and has the superamalgamation property. Then for any set $T\cup\{\f\eq\p\}$ of $\ofml(\lang)$-equations,
\[
T\fosc{\overline{\V}}\f\eq\p \quad\Longleftrightarrow\quad T^\ast\vDash_{\mV}\f^\ast \eq\p^\ast.
\]
\end{corollary}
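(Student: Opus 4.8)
The plan is to derive the biconditional by combining Theorem~\ref{t:functional} with Corollary~\ref{c:soundness2}, exactly as the connecting sentence indicates. The right-to-left implication $T^\ast\vDash_{\mV}\f^\ast\eq\p^\ast \Rightarrow T\fosc{\overline{\V}}\f\eq\p$ requires no further argument, as it is precisely the first assertion of Corollary~\ref{c:soundness2}, which is valid for every variety $\V$ of $\lang$-lattices.

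For the left-to-right implication, the ``moreover'' clause of Corollary~\ref{c:soundness2} reduces the task to showing that every member of $\mV$ is functional, and this is where I would invoke Theorem~\ref{t:functional} with $\K := \V$. To apply that theorem, I must check that $\V$ satisfies all four of its hypotheses. Admitting regular completions and having the superamalgamation property are assumed in the statement. The remaining two closure conditions hold because $\V$ is a variety: being an equational class, $\V$ is closed under subalgebras, and it is also closed under direct limits.

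The only point here that is not immediate is closure under direct limits, which I would justify briefly as follows: given a directed system of algebras in $\V$ with colimit $\alg{L}$ and any finite tuple of elements of $\alg{L}$, the tuple lifts to some algebra in the system, where every equation defining $\V$ holds; since the colimit maps are homomorphisms, the equation transfers to $\alg{L}$, so $\alg{L}\in\V$. With all four hypotheses verified, Theorem~\ref{t:functional} gives that every member of $\mV=\mK$ is functional, and the ``moreover'' clause of Corollary~\ref{c:soundness2} then yields $T\fosc{\overline{\V}}\f\eq\p \Rightarrow T^\ast\vDash_{\mV}\f^\ast\eq\p^\ast$. Combining the two implications completes the proof.

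Since the substantive content has already been established in Theorem~\ref{t:functional} (the functional representation), no genuine obstacle remains at this stage; the one mild subtlety is simply recalling that varieties are closed under direct limits, equivalently that equations persist under directed colimits, which is a standard fact about equational classes.
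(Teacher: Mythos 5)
Your proposal is correct and matches the paper's own (one-line) proof, which likewise obtains the corollary by combining Theorem~\ref{t:functional} (applied with $\K:=\V$) and Corollary~\ref{c:soundness2}; your verification that a variety is closed under subalgebras and direct limits fills in exactly the routine details the paper leaves implicit, and your argument for direct limits is the standard correct one.
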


\begin{example}\label{e:lattices}
The variety of lattices admits regular completions and has the superamalgamation property~\cite{Gra98}. Hence, by Theorem~\ref{t:functional}, every m-lattice is functional, and, by Corollary~\ref{c:completeness},  consequence in the one-variable fragment of first-order lattice logic corresponds to equational consequence in m-lattices.
\end{example}


\section{One-Variable Substructural Logics}\label{s:substructural}

In this section, we turn our attention to one-variable fragments of first-order substructural logics. Let $\langs$ be the lattice-ordered signature consisting of binary connectives $\jn$, $\mt$, $\pd$, and $\to$, and constant symbols $\zr$ and $\ut$. An {\em \FLe-algebra} is an $\langs$-lattice $\alg{A} = \tuple{A,\jn,\mt,\pd,\to,\zr,\ut}$ such that $\tuple{A,\pd, \ut}$ is a commutative monoid and $\to$ is the residuum of $\pd$, i.e., $a \pd b \leq c \iff a\leq b\to c$\, for all $a,b,c\in A$.

The class of \FLe-algebras forms a variety $\cls{FL}_e$ that serves as an algebraic semantics for the Full Lambek Calculus with exchange (see, e.g.,~\cite{GJKO07}), and subvarieties of $\cls{FL}_e$ provide algebraic semantics for other substructural logics. In particular, the varieties $\cls{FL}_{ew}$ and $\cls{FL}_{ec}$ consist of \FLe-algebras satisfying $\zr\le x\le \ut$ and $x\le x\pd x$, respectively. The variety $\cls{FL}_{ew}\cap\cls{FL}_{ec}$ is term-equivalent to $\cls{HA}$ (just identify $\pd$ and $\mt$), and $\cls{BA}$ and $\cls{GA}$ are term-equivalent to the subvarieties of $\cls{FL}_{ew}\cap\cls{FL}_{ec}$ axiomatized by $(x\to\zr)\to\zr\eq x$ and $(x\to y)\jn(y\to x)\eq\e$, respectively. $\cls{MV}$  is term-equivalent to  the subvariety of $\cls{FL}_{ew}$ satisfying $x\jn y \eq (x\to y)\to y$.

In combination with residuation, the defining equations of m-$\langs$-lattices yield further relationships between the propositional and modal connectives.

\begin{proposition}\label{p:mFLe}
Let $\alg{A}$ be an \FLe-algebra. Then any m-$\langs$-lattice $\tuple{\alg{A},\bo,\di}$ satisfies the equations
\[
\begin{array}{r@{\quad}l@{\qquad\qquad}r@{\quad}l}
{\rm (L6_\bo)}	&\bo (x\to \bo y) \eq\di x \to\bo y	& {\rm (L6_\di)}	& \bo (\bo x\to y) \eq\bo x \to\bo y.
\end{array}
\]
That is,  $\tuple{\alg{A},\bo,\di}$  is a monadic \FLe-algebra in the sense of~\cite{Tuy21}*{Definition~2.1}.
\end{proposition}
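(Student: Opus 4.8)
The plan is to verify each of the two equations by establishing the two inequalities separately, passing between $\pd$ and $\to$ via the residuation law $a\pd b\le c\iff a\le b\to c$, and invoking the characterizations of $\bo$ and $\di$ supplied by Lemma~\ref{l:subalgebra}. The crucial structural input is that lemma: $\bo A=\di A$ is a \emph{subuniverse} of $\alg{A}$, hence closed under $\to$ and $\pd$, and $\bo a=\max\{b\in\bo A\mid b\le a\}$, $\di a=\min\{b\in\bo A\mid a\le b\}$. In particular $\bo a\le a\le\di a$, every element of $\bo A$ is fixed by $\bo$ (since $z=\bo c$ gives $\bo z=\bo\bo c=\bo c=z$ by {\rm (L4$_\bo$)}), and composite expressions such as $\bo x\to\bo y$ and $\di x\to\bo y$ already lie in $\bo A$.

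For {\rm (L6$_\di$)}, I would set $c:=\bo(\bo x\to y)$. From $c\le\bo x\to y$ residuation gives $c\pd\bo x\le y$; since $c,\bo x\in\bo A$ the product $c\pd\bo x$ lies in $\bo A$ and is fixed by $\bo$, so applying $\bo$ (monotone by {\rm (L5$_\bo$)}) yields $c\pd\bo x\le\bo y$, i.e. $c\le\bo x\to\bo y$. Conversely, $\bo x\to\bo y\in\bo A$ and $(\bo x\to\bo y)\pd\bo x\le\bo y\le y$ give $\bo x\to\bo y\le\bo x\to y$, so maximality of $c$ among elements of $\bo A$ below $\bo x\to y$ forces $\bo x\to\bo y\le c$.

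For {\rm (L6$_\bo$)}, set $c:=\bo(x\to\bo y)$. The inequality $c\le x\to\bo y$ gives $c\pd x\le\bo y$, hence $x\le c\to\bo y$; the key point is that $c\to\bo y\in\bo A$, so minimality of $\di x$ among elements of $\bo A$ above $x$ yields $\di x\le c\to\bo y$, i.e. $c\pd\di x\le\bo y$ and thus $c\le\di x\to\bo y$. For the reverse inequality, note $\di x\to\bo y\in\bo A$ and, using $x\le\di x$ together with monotonicity of $\pd$, $(\di x\to\bo y)\pd x\le(\di x\to\bo y)\pd\di x\le\bo y$, so $\di x\to\bo y\le x\to\bo y$; maximality of $c$ in $\bo A$ below $x\to\bo y$ then gives $\di x\to\bo y\le c$. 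Once these equations hold, the final assertion that $\tuple{\alg{A},\bo,\di}$ is a monadic \FLe-algebra in the sense of~\cite{Tuy21} is immediate, as its defining equations are exactly those of an m-$\langs$-lattice together with {\rm (L6$_\bo$)} and {\rm (L6$_\di$)}.

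I expect the only genuinely non-mechanical step to be the forward direction of {\rm (L6$_\bo$)}: the passage from $c\pd x\le\bo y$ to $c\pd\di x\le\bo y$. This cannot come from monotonicity alone, since $x\le\di x$ pushes in the wrong direction; instead one must transpose via residuation to $x\le c\to\bo y$, observe that $c\to\bo y\in\bo A$, and then exploit the \emph{minimality} of $\di x$ among the elements of $\bo A$ lying above $x$. Everything else reduces to residuation combined with the adjunction data recorded in Lemma~\ref{l:subalgebra}.
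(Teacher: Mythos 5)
Your proof is correct and is in substance the same argument as the paper's: for each direction you transpose via residuation and then use that $\bo a$ is the greatest element of $\bo A$ below $a$ and $\di a$ the least element of $\bo A$ above $a$ --- in particular, your ``non-mechanical'' step (from $x \le c\to\bo y \in \bo A$ to $\di x \le c\to\bo y$) is exactly the step the paper performs with {\rm (L5$_\di$)}, {\rm (L3$_\di$)}, and {\rm ($\ops_\di$)}. The only difference is presentational: you invoke the adjunction through Lemma~\ref{l:subalgebra}, whereas the paper inlines it as equational chains using {\rm (L3$_\bo$)}, {\rm (L5$_\bo$)}, {\rm ($\ops_\bo$)} and their $\di$-counterparts, and it writes out only {\rm (L6$_\bo$)} (declaring {\rm (L6$_\di$)} ``very similar''), where you prove both.
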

\begin{proof}
We just prove {\rm (L6$_\bo$)}, as the proof for {\rm (L6$_\di$)} is very similar. Consider any $a,b\in A$. Since $a\le\di a$, by {\rm (L1$_\di$)}, also  $\di a\to\bo b\le a\to\bo b$. Hence, using {\rm (L5$_\bo$)}, {\rm (L3$_\bo$)}, and {\rm ($\ops_\bo$)},
\[
\di a\to\bo b = \bo \di a \to \bo b = \bo(\bo\di a \to\bo b) = \bo(\di a \to \bo b)\le \bo(a\to\bo b).
\]
Conversely, since $\bo (a\to\bo b) \le a\to\bo b$, by {\rm (L1$_\bo$)}, it follows by residuation that $a \le\bo (a\to\bo b)\to\bo b$ and hence, using {\rm (L5$_\di$)}, {\rm (L3$_\di$)}, and (\ops$_\di$),
\[
\di a \le\di(\bo (a\to\bo b)\to\bo b)=\bo (a\to\bo b)\to\bo b.
\] 
By residuation again, $\bo (a\to\bo b) \le\di a \to\bo b$. 
\end{proof}

The varieties $\cls{FL}_e$, $\cls{FL}_{ew}$, and $\cls{FL}_{ec}$ are closed under MacNeille completions and have the superamalgamation property (see,~e.g.,~\cite{GJKO07}). Hence Theorem~\ref{t:functional} and Corollary~\ref{c:completeness} yield the following result. 

\begin{theorem}\label{t:FLecompleteness}
Let $\V\in\{\cls{FL}_e,\cls{FL}_{ew},\cls{FL}_{ec}\}$. 
\begin{enumerate}
\item[\rm (a)]	Any member of $\mV$ is functional.  
\item[\rm (b)]	For any set $T\cup\{\f\eq\p\}$ of $\ofml(\langs)$-equations,
\[
T\fosc{\overline{\V}}\f\eq\p \quad\Longleftrightarrow\quad T\vDash_{\mV}\f^\ast\eq\p^\ast.
\]
\end{enumerate}
\end{theorem}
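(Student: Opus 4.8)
The plan is to obtain this theorem as an immediate application of the general machinery developed in Sections~\ref{s:algebraic} and~\ref{s:functional} to the three concrete varieties, once the hypotheses of Theorem~\ref{t:functional} and Corollary~\ref{c:completeness} have been verified for each $\V\in\{\cls{FL}_e,\cls{FL}_{ew},\cls{FL}_{ec}\}$. Both of those results require $\V$ to admit regular completions and to enjoy the superamalgamation property, and these are exactly the two facts recalled immediately above: closure under MacNeille completions yields regular completions (as noted in the example in Section~\ref{s:one-variable}), while the superamalgamation property for these varieties is available from the substructural-logic literature (e.g.~\cite{GJKO07}).

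First I would dispose of part~(a). Theorem~\ref{t:functional} additionally demands that the class be closed under subalgebras and direct limits. Since each $\V$ is a variety, closure under subalgebras is immediate, and closure under direct limits follows from the standard fact that a directed colimit of algebras all satisfying a fixed set of equations again satisfies those equations. Hence every hypothesis of Theorem~\ref{t:functional} is satisfied, and that theorem yields at once that every member of $\mV$ is functional.

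For part~(b) I would simply invoke Corollary~\ref{c:completeness}, whose hypotheses---namely that $\V$ is a variety admitting regular completions and having the superamalgamation property---are precisely the ones already checked. This delivers the biconditional between $T\fosc{\overline{\V}}\f\eq\p$ and $T^\ast\vDash_{\mV}\f^\ast\eq\p^\ast$ for every set $T\cup\{\f\eq\p\}$ of $\ofml(\langs)$-equations, which is the desired statement.

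Since the substantive work has already been carried out in the proof of Theorem~\ref{t:functional}, there is no genuine obstacle remaining; the proof is essentially a citation of the two preceding results. The only points demanding care are the routine bookkeeping verifications that a variety is closed under subalgebras and direct limits, and that the assumed closure under MacNeille completions does indeed supply regular completions in the sense of Section~\ref{s:one-variable}. Everything else follows directly.
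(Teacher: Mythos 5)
Your proposal is correct and follows essentially the same route as the paper, which likewise just cites closure of $\cls{FL}_e$, $\cls{FL}_{ew}$, and $\cls{FL}_{ec}$ under MacNeille completions and their superamalgamation property (from~\cite{GJKO07}) and then applies Theorem~\ref{t:functional} for part~(a) and Corollary~\ref{c:completeness} for part~(b). Your additional remarks that varieties are closed under subalgebras and direct limits, and that MacNeille completions supply regular completions, are exactly the (routine) verifications the paper leaves implicit.
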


\noindent
In~\cite{CGT12} it was proved that a variety of \FLe-algebras axiomatized relative to $\cls{FL}_e$ by equations of a certain simple syntactic form (called ``$\mathcal{N}_2$-equations'') is closed under MacNeille completions if and only if it has an analytic sequent calculus of a certain form. It has also been proved that many varieties of \FLe-algebras  have the superamalgamation property (equivalently, the Craig interpolation property) (see,~e.g.,~\cites{GJKO07,TabJal22}), but a precise characterization is not known.


\section{A Proof-Theoretic Approach}\label{s:prooftheory}

In this section, we apply proof-theoretic methods to obtain an alternative proof of Theorem~\ref{t:FLecompleteness}(b). Although no new results are proved, the approach described here may be used to deal with varieties of $\lang$-lattices that either do not admit regular completions or lack the superamalgamation property, and may also be used to tackle decidability and complexity issues for one-variable lattice logics.

Let $\ofmls(\langs)$ be the set of first-order $\langs$-formulas $\f,\p,\x,\dots$ constructed inductively from unary predicates $\{P_i\}_{i\in\N}$, variables $\{x\}\cup\{x_i\}_{i\in\N}$, connectives in $\langs$, and quantifiers $\All{x}$ and $\Exi{x}$  such that no variable $x_i$ is in the scope of a quantifier. Clearly, $\ofml(\langs)\subseteq\ofmls(\langs)$. We also write $\f(\bar{y})$ to denote that the free variables of $\f\in\ofmls(\langs)$ belong to the set $\bar{y}$, recalling that a sentence is a formula with no free variables. 

For the purposes of this paper, a {\em sequent} is an ordered pair of finite multisets of first-order $\langs$-formulas, denoted by $\Ga\seq\De$, such that $\De$ contains a most one $\langs$-formula. We also define for $n\in\N^{>0}$ and $\f_1,\dots,\f_n,\p\in\ofmls(\langs)$,
\[
\textstyle
\prod(\f_1,\dots,\f_n):=\f_1\cdots\f_n, \quad
\prod():=\ut, \quad
\sum(\p):=\p, \quad
\sum():=\zr.
\]
The (cut-free) sequent calculus $\fFLe$ is presented in Figure~\ref{f:fFLe}, subject to the following side-conditions:
\begin{enumerate}
\item[\rm (i)] the term $t$ occurring in $\falr$ and $\err$ is a variable occurring in the conclusion of the rule;
\item[\rm (ii)]  the variable $y$ occurring in the premise of $\farr$ and $\elr$ does not occur freely in the conclusion of the rule.
\end{enumerate}
$\fFLew$ and $\fFLec$ are defined as the extensions of $\fFLe$ with, respectively,
\[
\vcenter{\infer[\pfa{\wkr}]{\Ga_1,\Ga_2 \seq \De_1,\De_2}{\Ga_1 \seq \De_1}}
\qquad\text{ and }\qquad
\vcenter{\infer[\pfa{\cnr}]{\Ga_1,\Ga_2 \seq \De}{\Ga_1,\Ga_2,\Ga_2 \seq \De}}
\]
If there exists a derivation $d$ of a sequent $\Ga\seq\De$ in a sequent calculus $\lgc{C}$, we write either $d\der{\lgc{C}}\Ga\seq\De$ or $\der{\lgc{C}}\Ga\seq\De$, and let $\md(d)$ denote the maximum number of applications of $\falr$, $\farr$, $\elr$, $\err$ on a branch of $d$.

\begin{figure}
\centering
\fbox{
 \parbox{.9\linewidth}{
 \[
\begin{array}{c}
\text{Axioms}\\[.1in]
\begin{array}{ccccc}
\infer[\pfa{\idr}]{\f \seq \f}{} & \qquad & \infer[\pfa{\flr}]{\zr \seq}{} & \qquad & \infer[\pfa{\trr}]{\seq\ut}{}\
\end{array}\\[.15in]
\text{Operation Rules}\\[.1in]
\begin{array}{ccc}
\infer[\pfa{\tlr}]{\Ga,  \ut \seq \De}{\Ga \seq \De} & \quad &\infer[\pfa{\frr}]{\Ga \seq\zr}{\Ga \seq}\\[.13in]
\infer[\pfa{\ilr}]{\Ga_1, \Ga_2, \f \to \p \seq \De}{\Ga_1 \seq \f & \Ga_2, \p \seq \De} & \quad & 
\infer[\pfa{\irr}]{\Ga \seq \f \to \p}{\Ga, \f \seq \p}\\[.13in]
\infer[\pfa{\pdlr}]{\Ga,  \f \pd \p \seq \De}{\Ga,\f,\p \seq \De} & \quad &
\infer[\pfa{\pdrr}]{\Ga_1, \Ga_2 \seq \f \pd \p}{\Ga_1 \seq \f & \Ga_2 \seq \p}\\[.13in]
\infer[\pfa{\alr_1}]{\Ga, \f \mt \p \seq \De}{\Ga, \f \seq \De} & & 
\infer[\pfa{\orr_1}]{\Ga \seq \f \jn \p}{\Ga \seq \f}\\[.13in]
\infer[\pfa{\alr_2}]{\Ga, \f \mt \p \seq \De}{\Ga, \p \seq \De} & & 
\infer[\pfa{\orr_2}]{\Ga \seq \f \jn \p}{\Ga \seq \p}\\[.13in]
\infer[\pfa{\olr}]{\Ga, \f \jn \p \seq \De}{\Ga, \f \seq \De & \Ga, \p \seq \De} & & 
\infer[\pfa{\arr}]{\Ga \seq \f \mt \p}{\Ga \seq \f & \Ga \seq \p}\\[.13in]
\infer[\pfa{\falr}]{\Ga, \All{x}\f(x) \seq\De}{\Ga,\f(t)\seq\De} & & 
\infer[\pfa{\farr}]{\Ga\seq \All{x}\p(x)}{\Ga\seq\p(y)}\\[.13in]
\infer[\pfa{\elr}]{\Ga,\Exi{x}\f(x) \seq\De}{\Ga,\f(y)\seq\De} & & 
\infer[\pfa{\err}]{\Ga\seq \Exi{x}\p(x)}{\Ga\seq\p(t)}\\[-.10in]
\end{array}
\end{array}
\]}}
\caption{The Sequent Calculus $\fFLe$}
\label{f:fFLe}
\end{figure}

The next result follows directly from well-known completeness and cut-elimination results for sequent calculi for (first-order) substructural logics.

\begin{proposition}[\cites{OK85,Kom86}]\label{p:ono} 
Let $\V$ be $\cls{FL}_e$, $\cls{FL}_{ew}$, or $\cls{FL}_{ec}$, and let $\lgc{C}$ be $\fFLe$, $\fFLew$, or $\fFLec$, respectively.  For any  sequent $\Ga\seq\De$ consisting only of formulas from $\ofml(\langs)$, 
\[
\textstyle\der{\lgc{C}}\Ga\seq\De \quad\Longleftrightarrow\quad\: \fosc{\overline{\V}} \prod\Ga\le\sum\De.
\]
\end{proposition}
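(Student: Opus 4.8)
The plan is to factor the biconditional through the calculus $\lgc{C}^+$ obtained from $\lgc{C}$ by adjoining the cut rule $\cutr$, and to establish three facts separately: (i) $\lgc{C}$ is \emph{sound} for $\fosc{\overline{\V}}$; (ii) $\lgc{C}^+$ is \emph{complete} for $\fosc{\overline{\V}}$; and (iii) cut is \emph{admissible} in $\lgc{C}$, so that $\der{\lgc{C}^+}\Ga\seq\De$ and $\der{\lgc{C}}\Ga\seq\De$ coincide. The left-to-right implication of the proposition is then immediate from (i), while the converse follows from (ii), which produces a derivation with cut, followed by (iii), which removes it. Steps (ii) and (iii) are precisely the completeness and cut-elimination theorems for first-order $\FLe$, $\FLew$, and $\FLec$ proved in~\cites{OK85,Kom86}; the task here is to record how they combine and to check that the one-variable first-order semantics of Section~\ref{s:one-variable} over the complete algebras $\overline{\V}$ matches the model classes used there.

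For the soundness step (i), I would argue by induction on the height of a cut-free derivation, first extending the semantics of Section~\ref{s:one-variable} to formulas of $\ofmls(\langs)$ by interpreting the auxiliary free variables $x_i$ as elements of the domain of an $\alg{A}$-structure; this is harmless since, by definition of $\ofmls(\langs)$, no $x_i$ lies in the scope of a quantifier. A sequent $\Ga\seq\De$ is then read as the inequality $\prod\Ga\le\sum\De$, required to hold for every $\alg{A}\in\overline{\V}$, every $\alg{A}$-structure, and every assignment of the free variables. Each axiom is valid and each rule preserves validity: the operation rules use residuation together with the commutative-monoid and lattice structure of $\FLe$-algebras; the quantifier rules $\falr,\farr,\elr,\err$ use completeness of $\alg{A}$ to realise $\bo$ and $\di$ as infima and suprema, the eigenvariable conditions (i) and (ii) ensuring that the premise holds for all values of the fresh variable and hence bounds the relevant infimum or supremum; and for $\FLew$ and $\FLec$ the extra rules $\wkr$ and $\cnr$ are sound because their algebras satisfy $\zr\le x\le\ut$ and $x\le x\pd x$, respectively. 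Since $\lgc{C}\subseteq\lgc{C}^+$, this already yields the forward direction.

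The main work, and the main obstacle, is the completeness step (ii) together with the bridge to $\overline{\V}$. Here I would invoke the algebraic completeness of first-order $\FLe$ (resp.\ $\FLew$, $\FLec$): a Lindenbaum--Tarski construction yields an $\FLe$-algebra in which $\All{x}\f$ and $\Exi{x}\f$ are interpreted as the infimum and supremum of the instances of $\f$, the auxiliary variables $x_i$ playing the role of Henkin-style witnesses for the quantifier rules. To land inside $\overline{\V}$ one then passes to the MacNeille completion, which lies in $\V$ because $\V\in\{\cls{FL}_e,\cls{FL}_{ew},\cls{FL}_{ec}\}$ is closed under MacNeille completions (as recorded before Theorem~\ref{t:FLecompleteness}) and, being a regular completion, preserves the infima and suprema witnessing the quantifiers. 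Reading off the corresponding $\alg{A}$-structure shows that whenever $\der{\lgc{C}^+}\Ga\seq\De$ fails there is a model in $\overline{\V}$ refuting $\prod\Ga\le\sum\De$, giving the contrapositive of completeness. The delicate points are exactly that the quantifiers in the canonical algebra be \emph{genuine} infima and suprema over a domain (this is where the witnessing variables and the eigenvariable conditions are essential) and that passing to a complete algebra in $\overline{\V}$ not disturb them; closure under MacNeille completions and the regularity of that completion are what make this work, and they are also the reason the result is confined to these three varieties.

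Finally, the cut-elimination step (iii) is the standard Gentzen-style argument for these calculi, proceeding by the usual double induction on the complexity of the cut formula and the heights of the two subderivations, with the only additional care needed at the quantifier and (for $\FLew$, $\FLec$) structural rules. Combining (i)--(iii) as described above closes the biconditional.
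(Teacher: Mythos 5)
Your proposal is correct and takes essentially the same approach as the paper: the paper proves this proposition simply by appeal to the known completeness and cut-elimination theorems of \cites{OK85,Kom86}, and your three-step factorisation (soundness of the cut-free calculus, completeness of the calculus with cut via the Lindenbaum--Tarski construction followed by the regular MacNeille completion to land in $\overline{\V}$, then cut elimination) is precisely the standard argument packaged in those citations. One minor remark: soundness holds for the calculus with cut as well, so restricting the induction in your step (i) to cut-free derivations is unnecessary, though harmless.
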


We now prove that each of $\fFLe$, $\fFLew$, and $\fFLec$ satisfies a certain bounded interpolation property. 

\begin{theorem}\label{t:interpolation} 
Let $\lgc{C}\in\{\fFLe,\fFLew,\fFLec\}$. If $d\der{\lgc{C}}\Ga(\bar{y}),\Pi(\bar{z})\seq\De(\bar{z})$, where $\bar{y}\cap\bar{z}=\emptyset$, then there exist a sentence $\x$ and derivations $d_1,d_2$ such that $\md(d_1),\md(d_2)\le\md(d)$, $d_1\der{\lgc{C}}\Ga(\bar{y})\seq\x$, and $d_2\der{\lgc{C}}\Pi(\bar{z}),\x\seq\De(\bar{z})$.
\end{theorem}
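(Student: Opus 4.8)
The plan is to prove, by induction on the height of $d$, a stronger Maehara-style statement from which the theorem follows by specialization. Concretely, I would show: for \emph{every} splitting of the antecedent multiset of the endsequent as $\Ga',\Pi'$ (with the succedent kept on the $\Pi'$-side), so that the derived sequent reads $\Ga',\Pi'\seq\De$, there are a formula $\x$ and derivations $d_1\der{\lgc{C}}\Ga'\seq\x$ and $d_2\der{\lgc{C}}\Pi',\x\seq\De$ with $\md(d_1),\md(d_2)\le\md(d)$ such that every free variable of $\x$ occurs both in $\Ga'$ and in $\Pi',\De$. The theorem is then the instance where the variables of $\Ga$ and of $\Pi,\De$ are disjoint: the common-variable condition forces $\x$ to have no free variable, i.e.\ to be a sentence.

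For the axioms the interpolant is read off directly: for $\idr$ take $\x=\f$ when $\f$ is placed on the left (the common-variable condition then makes $\f$ a sentence) and $\x=\ut$ when $\f$ is on the right, and for $\flr,\trr$ one of $\zr,\ut$ works; all have $\md=0$. For the propositional operation rules I would use the familiar substructural Maehara combinations, gluing premise interpolants with $\pd$ and $\to$ for the multiplicative rules (such as $\ilr$, $\irr$, $\pdlr$, $\pdrr$) and with $\jn$ or $\mt$ for the additive branching rules ($\olr$ and $\arr$); the placement of the principal formula dictates which connective is used and on which side the interpolant is rebuilt, and one checks routinely that the common-variable condition is preserved and that $\md$ is unchanged, since none of these rules is counted by $\md$. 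The structural rules of $\fFLew$ and $\fFLec$ are handled by weakening or contracting the appropriate one of $d_1,d_2$, again without affecting $\md$.

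The crux is the four quantifier rules, where the side-conditions (i)--(ii) and the modal-depth bound must be respected at once. The clean cases are $\farr$ and $\elr$: here the principal formula sits in the half carrying $\De$, its eigenvariable $y$ does not occur in the endsequent and hence, by the common-variable invariant, not in $\x$, so $\x$ is left untouched and a single application of $\farr$ (resp.\ $\elr$) to $d_2$ quantifies the relevant side, giving $\md(d_2)=\md(d_2')+1\le\md(d)$ while $d_1=d_1'$. The delicate cases are the instantiation rules $\falr$ and $\err$: when the instantiating variable $t$ lies in the same half as the principal formula, one simply re-introduces the quantifier on that half by a single inference, matching the corresponding step of $d$ and leaving $\x$ fixed. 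I expect the main obstacle to be the \emph{cross} case, where a left-universal $\All{x}\f(x)$ belonging to $\Ga'$ is instantiated at a variable $t$ occurring only in $\Pi',\De$ (dually for $\err$): then $t$ is not shared by the two halves of the conclusion and so must be purged from the interpolant by replacing $\x$ with $\All{x_t}\x[x_t/t]$, which seems to force a quantifier inference on the very half that also re-introduces $\All{x}\f(x)$. Keeping $\md(d_1)$ from exceeding $\md(d)$ here is the heart of the matter, and the argument will hinge on an accounting that pairs each interpolant-quantification with a quantifier inference of $d$ and exploits side-condition (i) to restrict which variables can be introduced; verifying this bookkeeping in the cross case is the step I expect to require the most care.
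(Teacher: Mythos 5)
Your reduction to a Maehara-style statement with a common-variable condition stalls exactly where you predicted, and the repair you sketch is not available in this setting: in $\ofmls(\langs)$ the only quantifiers are $\All{x}$ and $\Exi{x}$, and by definition no variable $x_i$ may occur in the scope of a quantifier, so the ``purged'' interpolant $\All{x_t}\,\x[x_t/t]$ is simply not a formula of the language (nor is $\All{x}\,\x[x/t]$ admissible when $\x$ retains other free variables, as your strengthened invariant allows). Moreover, even in a richer language the classical Maehara move would violate the quantitative claim: in the cross case you would have to stack a quantifier inference for the interpolant on top of the reintroduced $\falr$ on the same branch, so $\md(d_1)$ could reach $\md(d)+1$ --- and the bound $\md(d_1),\md(d_2)\le\md(d)$ is precisely what the alternative proof of Theorem~\ref{t:FLecompleteness}(b) consumes in its induction on $\tuple{\md(d),\height(d)}$, so it cannot be weakened. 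Your proposal thus leaves the crux case genuinely open, with a mechanism that cannot work as stated.

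The paper resolves the cross case by a different idea, resting on a structural feature of $\ofmls(\langs)$ that your plan does not exploit: since no $x_i$ occurs under a quantifier, \emph{every} quantified formula $\All{x}\f(x)$ or $\Exi{x}\f(x)$ is a sentence. In the cross case --- say $\All{x}\f(x)$ in $\Ga(\bar y)$ instantiated at $u\in\bar z$ --- the paper re-splits the premise, placing $\f(u)$ on the $\Pi$-side (disjointness of the two variable sets is preserved because $u\in\bar z$), obtains by induction a sentence $\x'$ with derivations of $\Ga'(\bar y)\seq\x'$ and $\Pi(\bar z),\f(u),\x'\seq\De(\bar z)$, and takes $\x:=\x'\pd\All{x}\f(x)$, absorbing the principal \emph{sentence} into the interpolant by fusion; the quantifier is then reintroduced by a single $\falr$ with term $u$ inside $d_2$, i.e., on the same half where $d$ spent its quantifier inference, so the $\md$ bookkeeping is immediate and no pairing argument is needed. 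Dually, $\x:=\All{x}\f(x)\to\x'$ handles the case where the principal formula lies in $\Pi$ and $u\in\bar y$, and analogous interpolants $\x'\to\Exi{x}\f(x)$ work for $\err$. No quantification of the interpolant is ever performed, and sentencehood is maintained directly through the induction, which also makes your common-variable strengthening unnecessary. (A small further slip: in the $\elr$ case the principal formula may lie in either half, not only ``the half carrying $\De$'', though your eigenvariable mechanism would cover both.)
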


\begin{proof}
We proceed by induction on the height of $d$ and consider the last rule applied. Note that if $\bar{y}$ or $\bar{z}$ are empty --- in particular, if $\Ga(\bar{y}),\Pi(\bar{z})\seq\De(\bar{z})$ is an instance of an axiom --- we can take $\x:=\prod\Ga$ or $\x:=\prod\Pi$, respectively, so we may assume that this is not the case. Note also that the additional cases of $\wkr$ for $\fFLew$ and $\cnr$ for $\fFLec$ follow directly from an application of the induction hypothesis. We just consider here the quantifier rules $\falr$ and $\farr$, dealing with the remaining rules in the appendix.
 
\medskip
\noindent $\bullet$ $\falr$: Suppose first that $\Ga(\bar{y})$ is $\Ga'(\bar{y}),\All{x}\f(x)$ and 
\[
 d'\der{\lgc{C}}\Ga'(\bar{y}),\f(u),\Pi(\bar{z})\seq\De(\bar{z}),
\]
where $\md(d')=\md(d)-1$. For subcase (i), suppose that $u\in\bar{y}$. By the induction hypothesis, there exist a sentence $\x$ and derivations $d'_{1},d_{2}$ such that $\md(d'_1),\md(d_2)\le\md(d')$ and
\[
d'_1\der{\lgc{C}}\Ga'(\bar{y}),\f(u)\seq\x, \qquad d_2\der{\lgc{C}}\Pi(\bar{z}),\x\seq\De(\bar{z}).
\]
Using $\falr$, there exists also a derivation $d_1$ such that $\md(d_1)=\md(d'_1)+1\le\md(d')+1=\md(d)$ and
 \[
 d_1\der{\lgc{C}}\Ga'(\bar{y}),\All{x}\f(x)\seq\x.
 \]
For subcase (ii), suppose that $u\in\bar{z}$. By the induction hypothesis, there exist a sentence $\x'$ and derivations $d'_{1},d'_{2}$ such that $\md(d'_1),\md(d'_2)\le\md(d')$ and
 \[
 d'_1\der{\lgc{C}}\Ga'(\bar{y})\seq\x',\qquad d'_2\der{\lgc{C}}\Pi(\bar{z}),\f(u),\x'\seq\De(\bar{z}).
 \]
 We define $\x:=\x'\pd\All{x}\f(x)$ and obtain derivations $d_1,d_2$ satisfying $\md(d_1)=\md(d'_1)\le\md(d')<\md(d)$, $\md(d_2)=\md(d'_2)+1\le\md(d')+1=\md(d)$, and
 \begin{align*}
 d_1\der{\lgc{C}}\Ga'(\bar{y}),\All{x}\f(x)\seq\x'\pd\All{x}\f(x), \qquad 
  d_2\der{\lgc{C}}\Pi(\bar{z}),\x'\pd\All{x}\f(x)\seq\De(\bar{z}).
\end{align*}
Suppose next that $\Pi(\bar{z})$ is $\Pi'(\bar{z}),\All{x}\f(x)$ and
\[
d'\der{\lgc{C}}\Ga(\bar{y}),\Pi'(\bar{z}),\f(u)\seq\De(\bar{z}).
\]
 The case of $u\in\bar{z}$ is  very similar to subcase (i) above, so suppose that $u\in\bar{y}$. By the induction hypothesis, there exist a sentence $\x'$ and derivations $d'_{1},d'_{2}$ such that  $\md(d'_1),\md(d'_2)\le\md(d')$ and
 \[
 d'_1\der{\lgc{C}}\Ga'(\bar{y}),\f(u)\seq\x', \qquad
  d'_2\der{\lgc{C}}\Pi'(\bar{z}),\x'\seq\De(\bar{z}).
  \] 
  We let $\x:=\All{x}\f(x)\to\x'$ and obtain derivations $d_1,d_2$ satisfying  $\md(d_1)=\md(d'_1)+1\le\md(d')+1=\md(d)$, $\md(d_2)=\md(d'_2)<\md(d)$, and
 \begin{align*}
 d_1\der{\lgc{C}}\Ga(\bar{y})\seq\All{x}\f(x)\to\x',\quad
d_2\der{\lgc{C}}\Pi'(\bar{z}),\All{x}\f(x),\All{x}\f(x)\to\x'\seq\De(\bar{z}).
\end{align*}
\noindent $\bullet$ $\farr$: Suppose that $\De(\bar{z})$ is $\All{x}\f(x)$ and for some variable $u$ that does not occur freely in $\Ga(\bar{y}),\Pi(\bar{z})\seq\All{x}\f(x)$,
\[
d'\der{\lgc{C}}\Ga(\bar{y}),\Pi(\bar{z})\seq\f(u),
\]
where $\md(d')=\md(d)-1$. By the induction hypothesis, there exist a sentence $\x$ and derivations $d_{1},d'_{2}$ such that  $\md(d_1),\md(d'_2)\le\md(d')$ and
\[
d_1\der{\lgc{C}}\Ga(\bar{y})\seq\x,\qquad d'_2\der{\lgc{C}}\Pi(\bar{z}),\x\seq\f(u).
\]
An application of $\farr$ yields a derivation $d_2$ satisfying $\md(d_2)=\md(d'_2)+1\le\md(d')+1=\md(d)$ and
$
d_2\der{\lgc{C}}\Pi(\bar{z}),\x\seq\All{x}\f(x). 
$
\end{proof}

\noindent 
{\bf Alternative proof of Theorem~\ref{t:FLecompleteness}(b).}
The right-to-left direction follows from Corollary~\ref{c:soundness2}.  For the converse, let $\V$ be $\cls{FL}_e$, $\cls{FL}_{ew}$, or $\cls{FL}_{ec}$, and let $\lgc{C}$ be $\fFLe$, $\fFLew$, or $\fFLec$, respectively. We note first that due to compactness and the local deduction theorem for $\fosc{\overline{\V}}$ (see~\cite{CN21}*{Sections~4.6, 4.8}), we can restrict to the case where $T = \emptyset$. Hence, by  Proposition~\ref{p:ono}, it suffices to prove that for any sequent $\Ga\seq\De$ consisting only of formulas from $\ofml(\langs)$, 
\[
\textstyle
d\der{\lgc{C}}\Ga\seq\De
\quad\Longrightarrow\quad
\vDash_{\mV} (\prod\Ga)^\ast \le (\sum\De)^\ast.
\]
We proceed by induction on the lexicographically ordered pair $\tuple{\md(d),\height(d)}$, where $\height(d)$ is the height of the derivation $d$. The base cases are clear and all the cases for the last application of a rule in $d$ except $\farr$ and $\elr$ follow by applying the induction hypothesis and the equations defining $\mV$. Just note that for each such rule, the premises contain only formulas from $\ofml(\langs)$ with at least one fewer symbol. In particular, for $\falr$ and $\err$, the term $t$ occurring in the premise must be $x$ and the result follows using {\rm (L1$_\bo$)} or {\rm (L1$_\di$)}.

Consider now a last application of $\farr$ in $d$, where $\De$ is $\All{x}\p(x)$. Then $d'\der{\lgc{C}}\Ga\seq\p(y)$ for some variable $y$ that does not occur freely in $\Ga\seq\All{x}\p(x)$, where $\md(d')=\md(d)-1$. If $y=x$, then $x$ does not occur freely in $\Ga$ and the result follows by an application of the induction hypothesis and equations defining $\mV$. Suppose that $y\neq x$. By Theorem~\ref{t:interpolation}, there exist a sentence $\x$ and derivations $d_1,d_2$ such that $d_1\der{\lgc{C}}\Ga\seq\x$ and $d_2\der{\lgc{C}}\x\seq\p(y)$ with $\md(d_1),\md(d_2)\le\md(d')$. Since $\x$ is a sentence and $y\neq x$, we can substitute in $d_2$ every free occurrence of $x$ by some new variable $z$, and then every occurrence of $y$ by $x$, to obtain a derivation $d'_2$ of $\x\seq\p(x)$ with $\md(d'_2)=\md(d_2)$. By the induction hypothesis, $\vDash_{\mV} (\prod\Ga)^\ast \le \x^\ast$ and $\vDash_{\mV} \x^\ast \le\p(x)^\ast$. Since $\x$ is a sentence,  the equations defining $\mV$ yield also $\vDash_{\mV} \x^\ast \le (\All{x}\p(x))^\ast$. So  $\vDash_{\mV} \Ga^\ast \le (\All{x}\p(x))^\ast$.

Consider finally a last application of $\elr$ in $d$, where $\Ga$ is $\Ga',\Exi{x}\p(x)$. Then $d'\der{\lgc{C}}\Ga',\p(y)\seq\De$ for some variable $y$  that does not occur freely in $\Ga',\Exi{x}\p(x)\seq\De$, where $\md(d')<\md(d)$. If $y=x$, then $x$ does not occur freely in $\Ga'$ or $\De$ and the result follows by applying the induction hypothesis and equations defining $\mV$. Suppose that $y\neq x$. By Theorem~\ref{t:interpolation}, there exist a sentence $\x$ and derivations $d_1,d_2$ such that $d_1\der{\lgc{C}}\p(y)\seq\x$ and $d_2\der{\lgc{C}}\Ga',\x\seq\De$ with $\md(d_1),\md(d_2)\le\md(d')$. Since $\x$ is a sentence and $y\neq x$, we can substitute in $d_1$ every free occurrence of $x$ by some new variable $z$, and then every occurrence of $y$ by $x$, to obtain a  derivation $d'_1$ of $\p(x)\seq\x$ with $\md(d'_1)=\md(d')$. By the induction hypothesis, $\vDash_{\mV} \p(x)^\ast \le\x^\ast$ and $\vDash_{\mV} (\prod(\Ga',\x))^\ast \le(\sum\De)^\ast$. Since $\x$ is a sentence, the equations defining $\mV$ yield also $\vDash_{\mV} (\Exi{x}\p(x))^\ast \le\x^\ast$. So $\vDash_{\mV} (\prod(\Ga', \Exi{x}\p(x)))^\ast  \le(\sum\De)^\ast$. \qed


\section{Concluding Remarks}\label{s:concluding}

Let us conclude this paper by sketching a broader perspective. Given some class $\K$ of complete $\lang$-lattices, the challenge is to find a (natural) axiomatization of the generalized quasivariety generated by the class of full functional m-$\lang$-lattices of the form $\tuple{\alg{A}^W,\bo,\di}$ for some $\alg{A}\in\K$ and set $W$. Corollary~\ref{c:completeness} shows that in the case where $\K$ is the class of complete members of a variety $\V$ that admits regular completions and has the superamalgamation property, this generalized quasivariety is in fact the variety $\mV$. In general, however, we may need to add further axioms to obtain a proper subvariety or even proper subquasivariety or sub-generalized quasivariety of $\mV$.

For example, let $\V$ be a variety of {\em semilinear} \FLe-algebras, that is, algebras that are isomorphic to a subdirect product of totally ordered \FLe-algebras. In general, such varieties may not admit regular completions (e.g., as shown in \cite{GP02} for $\V = \cls{MV}$), so Corollary~\ref{c:completeness} may not apply.  Moreover, using the fact that $\V$ is generated by totally ordered \FLe-algebras, $\fosc{\overline{\V}}\Exi{x}\f\pd\Exi{x}\f \eq \Exi{x}(\f\pd \f)$, while, as proved in Example~\ref{e:monadicvarieties}, if $\textbf{\L}_3\in\V$ (e.g., if $\V$ is $\cls{MV}$ or the variety of all semilinear \FLe-algebras), then $\not\vDash_{\mV}\di x\pd\di x \eq \di(x\pd x)$. 

In fact, for a variety $\V$ of semilinear \FLe-algebras, first-order semantical consequence is typically defined with respect to the class of its complete totally ordered members. In this case, the corresponding generalized quasivariety will satisfy also the constant domain axiom $\bo(\bo x\jn y)\eq\bo x\jn\bo y$. Indeed, if $\V$ is $\cls{MV}$, this generalized quasivariety is the variety of monadic MV-algebras axiomatized as the subvariety of $\mcls{MV}$ satisfying $\di x\pd\di x \eq \di(x\pd x)$ and the constant domain axiom~\cites{Rut59}. Interestingly, a proof of this latter result is given in~\cites{CCVR20} using the fact that the class of totally ordered MV-algebras has the amalgamation property (see also~\cites{MT20,Tuy21} for related results), suggesting that the approach developed in this paper can be adapted to a broader class of one-variable lattice logics.


\bibliographystyle{aiml22}
\begin{bibdiv}
\begin{biblist}

\bib{Bez98}{article}{
      author={Bezhanishvili, G.},
       title={Varieties of monadic {H}eyting algebras - part {I}},
        date={1998},
     journal={Studia Logica},
      volume={61},
      number={3},
       pages={367\ndash 402},
}

\bib{BH04}{article}{
      author={Bezhanishvili, G.},
      author={Harding, J.},
       title={Mac{N}eille completions of {H}eyting algebras},
        date={2004},
     journal={Houston J. Math.},
      volume={30},
       pages={937\ndash 952},
}

\bib{BH02}{article}{
      author={Bezhanishvili, G.},
      author={Harding, J.},
       title={Functional monadic {H}eyting algebras},
        date={2002},
     journal={Algebra Universalis},
      volume={48},
       pages={1\ndash 10},
}

\bib{Bul66}{article}{
      author={Bull, R.A.},
       title={{MIPC} as formalisation of an intuitionist concept of modality},
        date={1966},
     journal={J. Symb. Log.},
      volume={31},
       pages={609\ndash 616},
}

\bib{CMRR17}{article}{
      author={Caicedo, X.},
      author={Metcalfe, G.},
      author={Rodr{\'\i}guez, R.},
      author={Rogger, J.},
       title={Decidability in order-based modal logics},
        date={2017},
     journal={J. Comput. System Sci.},
      volume={88},
       pages={53\ndash 74},
}

\bib{CMRT22}{unpublished}{
      author={Caicedo, X.},
      author={Metcalfe, G.},
      author={Rodr{\'\i}guez, R.},
      author={Tuyt, O.},
       title={One-variable fragments of intermediate logics over linear
  frames},
        date={2022},
        note={Inform. and Comput., to appear},
}

\bib{CR15}{article}{
      author={Caicedo, X.},
      author={Rodr{\'\i}guez, R.},
       title={Bi-modal {G}{\"o}del logic over $[0,1]$-valued {K}ripke frames},
        date={2015},
     journal={J. Logic Comput.},
      volume={25},
      number={1},
       pages={37\ndash 55},
}

\bib{CCVR20}{article}{
      author={Casta{\~n}o, D.},
      author={Cimadamore, C.},
      author={Varela, J.P.D.},
      author={Rueda, L.},
       title={Completeness for monadic fuzzy logics via functional algebras},
        date={2021},
     journal={Fuzzy Sets and Systems},
      volume={407},
       pages={161\ndash 174},
}

\bib{CGT11}{article}{
      author={Ciabattoni, A.},
      author={Galatos, N.},
      author={Terui, K.},
       title={Mac{N}eille completions of {FL}-algebras},
        date={2011},
     journal={Algebra Universalis},
      volume={66},
      number={4},
       pages={405\ndash 420},
}

\bib{CGT12}{article}{
      author={Ciabattoni, A.},
      author={Galatos, N.},
      author={Terui, K.},
       title={Algebraic proof theory for substructural logics: Cut-elimination
  and completions},
        date={2012},
     journal={Ann. Pure Appl. Logic},
      volume={163},
      number={3},
       pages={266\ndash 290},
}

\bib{CN21}{book}{
      author={Cintula, P.},
      author={Noguera, C.},
       title={Logic and implication},
   publisher={Springer},
        date={2021},
}

\bib{Cor92}{article}{
      author={Corsi, G.},
       title={Completeness theorem for {D}ummett's \emph{LC} quantified},
        date={1992},
     journal={Studia Logica},
      volume={51},
       pages={317\ndash 335},
}

\bib{dNG04}{article}{
      author={di~Nola, A.},
      author={Grigolia, R.},
       title={On monadic \emph{MV}-algebras},
        date={2004},
     journal={Ann. Pure Appl. Logic},
      volume={128},
      number={1-3},
       pages={125\ndash 139},
}

\bib{GJKO07}{book}{
      author={Galatos, N.},
      author={Jipsen, P.},
      author={Kowalski, T.},
      author={Ono, H.},
       title={Residuated lattices: An algebraic glimpse at substructural
  logics},
   publisher={Elsevier},
        date={2007},
}

\bib{GP02}{article}{
      author={Gehrke, M.},
      author={Priestley, H.A.},
       title={Non-canonicity of \emph{MV}-algebras},
        date={2002},
     journal={Houston J. Math.},
      volume={28},
      number={3},
       pages={449\ndash 456},
}

\bib{Gra98}{book}{
      author={Gr{\"a}tzer, G.},
       title={General lattice theory},
     edition={2},
   publisher={Birkh{\"a}user},
        date={1998},
}

\bib{Hal55}{article}{
      author={Halmos, P.R.},
       title={Algebraic logic, {I}. {M}onadic {B}oolean algebras},
        date={1955},
     journal={Compos. Math.},
      volume={12},
       pages={217\ndash 249},
}

\bib{Kom86}{article}{
      author={Komori, Y.},
       title={Predicate logics without the structural rules},
        date={1986},
     journal={Studia Logica},
      volume={45},
      number={4},
       pages={393\ndash 104},
}

\bib{MT20}{incollection}{
      author={Metcalfe, G.},
      author={Tuyt, O.},
       title={A monadic logic of ordered abelian groups},
        date={2020},
   booktitle={Proc.~{AiML} 2020},
      series={Advances in Modal Logic},
      volume={13},
   publisher={College Publications},
       pages={441\ndash 457},
}

\bib{MV57}{article}{
      author={Monteiro, A.},
      author={Varsavsky, O.},
       title={Algebras de {H}eyting mon{\'a}dicas},
        date={1957},
     journal={Actas de las X Jornadas de la Uni{\'o}n Matem{\'a}tica Argentina,
  Bah{\'\i}a Blanca},
       pages={52\ndash 62},
}

\bib{OK85}{article}{
      author={Ono, H.},
      author={Komori, Y.},
       title={Logic without the contraction rule},
        date={1985},
     journal={J. Symb. Log.},
      volume={50},
       pages={169\ndash 201},
}

\bib{OS88}{article}{
      author={Ono, H.},
      author={Suzuki, N.-Y.},
       title={Relations between intuitionistic modal logics and intermediate
  predicate logics},
        date={1988},
     journal={Rep. Math. Logic},
      volume={22},
       pages={65\ndash 87},
}

\bib{Rut59}{thesis}{
      author={Rutledge, J.D.},
       title={A preliminary investigation of the infinitely many-valued
  predicate calculus},
        type={Ph.D. Thesis},
        date={1959},
}

\bib{Suz89}{article}{
      author={Suzuki, N.-Y.},
       title={An algebraic approach to intuitionistic modal logics in
  connection with intermediate predicate logics,},
        date={1989},
     journal={Studia Logica},
       pages={141\ndash 155},
}

\bib{Suz90}{article}{
      author={Suzuki, N.-Y.},
       title={Kripke bundles for intermediate predicate logics and {K}ripke
  frames for intuitionistic modal logics},
        date={1990},
     journal={Studia Logica},
      volume={49},
      number={3},
       pages={289\ndash 306},
}

\bib{TabJal22}{unpublished}{
      author={Tabatabai, A.A.},
      author={Jalali, R.},
       title={Universal proof theory: Semi-analytic rules and interpolation},
        date={2022},
        note={arXiv:1808.06258},
}

\bib{Tuy21}{thesis}{
      author={Tuyt, O.},
       title={One-variable fragments of first-order many-valued logics},
        type={Ph.D. Thesis},
        date={2021},
}

\end{biblist}
\end{bibdiv}


\appendix

\section{Missing cases for the proof of Theorem~\ref{t:interpolation}}

\noindent $\bullet$ $\elr$: Suppose first that $\Ga(\bar{y})$ is $\Ga'(\bar{y}),(\exists x)\f(x)$ and for some variable $u$ that does not occur freely in $\Ga'(\bar{y}),(\exists x)\f(x),\Pi(\bar{z})\seq\De(\bar{z})$,
\[
d'\der{\lgc{C}}\Ga'(\bar{y}),\f(u),\Pi(\bar{z})\seq\De(\bar{z}),
\]
where $\md(d')=\md(d)-1$. Let $\bar{y'}:=\bar{y}\cup\{u\}$ and $\hat{\Ga}(\bar{y'}):=\Ga'(\bar{y})\cup\{\f(u)\}$.  By the induction hypothesis, we obtain a sentence $\x$ and derivations $d_1',d_2$ such that $\md(d_1'),\md(d_2)\leq\md(d')=\md(d)-1$ and
\[
d_1'\der{\lgc{C}}\hat{\Ga}(\bar{y'})\seq\x,\qquad d_2\der{\lgc{C}}\Pi(\bar{z}),\x\seq\De(\bar{z}).
\]
The derivation $d_1'$ together with an application of $\elr$ yields a derivation $d_1$ such that $\md(d_1)=\md(d_1')+1\leq\md(d')+1=\md(d)$ and 
\[
d_1\der{\lgc{C}}\Ga'(\bar{y}),(\exists x)\f(x)\seq\x.
\]
Now suppose that $\Pi(\bar{z})$ is $\Pi'(\bar{z}),(\exists x)\f(x)$ and for some variable $u$ that does not occur freely in $\Ga(\bar{y}),\Pi'(\bar{z}),(\exists x)\f(x)\seq\De(\bar{z})$,
\[
d'\der{\lgc{C}}\Ga(\bar{y}),\Pi'(\bar{z}),\f(u)\seq\De(\bar{z}),
\]
where $\md(d')=\md(d)-1$. We let $\bar{z'}:=\bar{z}\cup\{u\}$ and $\hat{\Pi}(\bar{z'}):=\Pi'(\bar{z})\cup\{\f(u)\}$. Note that  $\De(\bar{z'})=\De(\bar{z})$. By the induction hypothesis, we obtain a sentence $\x$ and derivations $d_1,d_2'$ such that $\md(d_1),\md(d_2')\leq\md(d')=\md(d)-1$ and
\[
d_1\der{\lgc{C}}\Ga(\bar{y})\seq\x,\qquad d_2'\der{\lgc{C}}\hat{\Pi}(\bar{z'}),\x\seq\De(\bar{z'}).
\]
The derivation $d_2'$ together with an application of $\elr$ yields a derivation $d_2$ such that $\md(d_2)=\md(d_2')+1\leq\md(d')+1=\md(d)$ and 
\[
d_2\der{\lgc{C}}\Pi'(\bar{z}),(\exists x)\f(x),\x\seq\De(\bar{z}).
\]

\medskip\noindent
$\bullet$ $\err$: Suppose that $\De(\bar{z})$ is $(\exists x)\f(x)$ and there is a derivation $d'$ such that $\md(d')=\md(d)-1$ and
\[
d'\der{\lgc{C}}\Ga(\bar{y}),\Pi(\bar{z})\seq\f(u).
\]
There are two subcases. For subcase (i), suppose that $u\in\bar{y}$. By the induction hypothesis, there exist a sentence $\x'$ and derivations $d_1',d_2'$ such that $\md(d_1'),\md(d_2')\leq\md(d')=\md(d)-1$ and
\[
d_1'\der{\lgc{C}}\Pi(\bar{z})\seq\x',\qquad d_2'\der{\lgc{C}}\Ga(\bar{y}),\x'\seq\f(u).
\]
Let $\x:=\x'\to(\exists x)\f(x)$. Then $d_1'$, together with the derivation $\hat{d}\der{\lgc{C}}(\exists x)\f(x)\seq(\exists x)\f(x)$ and an application of $\ilr$, yields a derivation $d_2$, and $d_2'$, together with applications of  $\err$ and $\irr$, yields a derivation $d_1$, satisfying $\md(d_2)=\md(d_1')$, $\md(d_1)=\md(d_2')+1\leq\md(d')+1=\md(d)$ and
\[
d_2\der{\lgc{C}}\Pi(\bar{z}),\x'\to(\exists x)\f(x)\seq(\exists x)\f(x),\qquad d_1\der{\lgc{C}}\Ga(\bar{y})\seq\x'\to(\exists x)\f(x).
\]

For subcase (ii), suppose that $u\in\bar{z}$. By the induction hypothesis, there exist a sentence $\x$ and derivations $d_1,d_2'$ such that $\md(d_1),\md(d_2')\leq\md(d')=\md(d)-1$ and
\[
d_1\der{\lgc{C}}\Ga(\bar{y})\seq\x,\qquad d_2'\der{\lgc{C}}\Pi(\bar{z}),\x\seq\f(u).
\]
The derivation $d_2'$ with an application of $\err$ yields a derivation $d_2$ such that $\md(d_2)=\md(d_2')+1\leq\md(d')+1=\md(d)$ and
\[
d_2\der{\lgc{C}}\Pi(\bar{z}),\x\seq(\exists x)\f(x).
\]

\medskip\noindent
$\bullet$ $\ilr$: Suppose first that $\Ga(\bar{y})$ is $\Ga_1(\bar{y}),\Ga_2(\bar{y}),\f(\bar{y})\to\p(\bar{y})$ and $\Pi(\bar{z})$ is $\Pi_1(\bar{z}),\Pi_2(\bar{z})$, and 
\[
d'_1\der{\lgc{C}}\Ga_1(\bar{y}),\Pi_1(\bar{z})\seq\f(\bar{y}), \qquad d'_2\der{\lgc{C}}\Ga_2(\bar{y}),\p(\bar{y}),\Pi_2(\bar{z})\seq\De(\bar{z}).
\]
By the induction hypothesis, there exist sentences $\x_1,\x_2$ and derivations $d'_{11},d'_{12},d'_{21},d'_{22}$ such that 
\begin{align*}
d'_{11}\der{\lgc{C}}\Ga_1(\bar{y}),\x_1\seq\f(\bar{y}), &  \qquad d'_{12}\der{\lgc{C}}\Pi_1(\bar{z})\seq\x_1,\\
d'_{21}\der{\lgc{C}}\Ga_2(\bar{y}),\p(\bar{y})\seq\x_2, & \qquad d'_{22}\der{\lgc{C}}\Pi_2(\bar{z}),\x_2\seq\De(\bar{z}).
\end{align*} 
 Let  $\x:=\x_1\to\x_2$. Then the derivations $d'_{11},d'_{21}$, together with applications of $\ilr$ and $\irr$, yield a derivation $d_1$, and the derivations $d'_{12},d'_{22}$, together with an application of $\ilr$ yield a derivation $d_2$ satisfying
  \begin{align*}
 & d_{1}\der{\lgc{C}}\Ga_1(\bar{y}),\Ga_2(\bar{y}),\f(\bar{y})\to\p(\bar{y})\seq\x_1\to\x_2,\\
& d_{2}\der{\lgc{C}}\Pi_1(\bar{z}),\Pi_2(\bar{z}),\x_1\to\x_2\seq\De(\bar{z}).
 \end{align*}
Clearly, the constraints on $\md (d_1)$ and $\md(d_2)$ are satisfied.

Now suppose that $\Ga(\bar{y})$ is $\Ga_1(\bar{y}),\Ga_2(\bar{y})$ and $\Pi(\bar{z})$ is $\Pi_1(\bar{z}),\Pi_2(\bar{z}),\f(\bar{z})\to\p(\bar{z})$, and 
\[
d'_1\der{\lgc{C}}\Ga_1(\bar{y}),\Pi_1(\bar{z})\seq\f(\bar{z}), \qquad
d'_2\der{\lgc{C}}\Ga_2(\bar{y}),\p(\bar{z}),\Pi_2(\bar{z})\seq\De(\bar{z}). 
\]
By the induction hypothesis, there exist sentences $\x_1,\x_2$ and derivations $d'_{11},d'_{12},d'_{21},d'_{22}$ such that 
\begin{align*}
d'_{11}\der{\lgc{C}}\Ga_1(\bar{y})\seq\x_1, & \qquad d'_{12}\der{\lgc{C}}\Pi_1(\bar{z}),\x_1\seq\f(\bar{z}),\\
d'_{21}\der{\lgc{C}}\Ga_2(\bar{y})\seq\x_2, & \qquad d'_{22}\der{\lgc{C}}\Pi_2(\bar{z}),\p(\bar{z}),\x_2\seq\De(\bar{z}).
\end{align*} 
Let  $\x:=\x_1\pd\x_2$. Then the derivations $d'_{11},d'_{21}$, together with an application of $\pdrr$, and the derivations $d'_{12},d'_{22}$, together with applications of $\ilr$ and $\pdlr$, yield derivations $d_1$ and $d_2$, respectively,  such that
\begin{align*}
 & d_{1}\der{\lgc{C}}\Ga_1(\bar{y}),\Ga_2(\bar{y})\seq\x_1\pd\x_2,\\
& d_{2}\der{\lgc{C}}\Pi_1(\bar{z}),\Pi_2(\bar{z}),\f(\bar{z})\to\p(\bar{z}),\x_1\pd\x_2\seq\De(\bar{z}).
 \end{align*}
Again, the constraints on $\md(d_1)$ and $\md(d_2)$ are clearly satisfied in this case.

\pagebreak

\medskip\noindent
$\bullet$ $\irr$: Suppose that $\De(\bar{z})$ is $\f(\bar{z})\to\p(\bar{z})$ and
\[
d'\der{\lgc{C}}\Ga(\bar{y}),\Pi(\bar{z}),\f(\bar{z})\seq\p(\bar{z}).
\]
By the induction hypothesis, there exist a sentence $\x$ and derivations $d_1,d_2'$ such that
\[
d_1\der{\lgc{C}}\Ga(\bar{y})\seq\x,\qquad d_2'\der{\lgc{C}}\Pi(\bar{z}),\f(\bar{z}),\x\seq\p(\bar{z}).
\]
The derivation $d_2'$ with an application of $\irr$ yields a derivation $d_2$ such that
\[
d_2\der{\lgc{C}}\Pi(\bar{z}),\x\seq\f(\bar{z})\to\p(\bar{z}).
\]
The constraints on $\md(d_1)$ and $\md(d_2)$ clearly hold.

\medskip\noindent
$\bullet$ $\olr$: Suppose first that $\Ga(\bar{y})$ is $\Ga'(\bar{y}),\f_1(\bar{y})\jn\f_2(\bar{y})$ and
\[
d_1'\der{\lgc{C}}\Ga'(\bar{y}),\f_1(\bar{y}),\Pi(\bar{z})\seq\De(\bar{z}),\qquad d_2'\der{\lgc{C}}\Ga'(\bar{y}),\f_2(\bar{y}),\Pi(\bar{z})\seq\De(\bar{z}).
\]
By the induction hypothesis, there exist sentences $\x_1,\x_2$ and derivations $d'_{11},d'_{12},d'_{21},d'_{22}$ such that
\begin{align*}
&d'_{11}\der{\lgc{C}}\Ga'(\bar{y}),\f_1(\bar{y})\seq\x_1,\qquad d'_{12}\der{\lgc{C}}\Pi(\bar{z}),\x_1\seq\De(\bar{z}),\\
&d'_{21}\der{\lgc{C}}\Ga'(\bar{y}),\f_2(\bar{y})\seq\x_2,\qquad d'_{22}\der{\lgc{C}}\Pi(\bar{z}),\x_2\seq\De(\bar{z}).
\end{align*}
Define $\x:=\x_1\jn\x_2$. The derivations $d'_{11},d'_{21}$, together with applications of $\orr_1$, $\orr_2$, and $\olr$, yield a derivation $d_1$, and  the derivations $d'_{12},d'_{22}$, together with an application of $\olr$, yield a derivation $d_2$, satisfying
\[
d_1\der{\lgc{C}}\Ga'(\bar{y}),\f_1(\bar{y})\jn\f_2(\bar{y})\seq\x_1\jn\x_2,\qquad d_2\der{\lgc{C}}\Pi(\bar{z}),\x_1\jn\x_2\seq\De(\bar{z}).
\]
The constraints on $\md(d_1)$ and $\md(d_2)$ clearly hold. 

Suppose now that $\Pi(\bar{z})$ is $\Pi'(\bar{z}),\f_1(\bar{z})\jn\f_2(\bar{z})$ and
\[
d_1'\der{\lgc{C}}\Ga(\bar{y}),\Pi'(\bar{z}),\f_1(\bar{z})\seq\De(\bar{z}),\qquad d_2'\der{\lgc{C}}\Ga(\bar{y}),\Pi'(\bar{z}),\f_2(\bar{z})\seq\De(\bar{z}).
\]
By the induction hypothesis, there exist sentences $\x_1,\x_2$ and derivations $d'_{11},d'_{12},d'_{21},d'_{22}$ such that
\begin{align*}
&d'_{11}\der{\lgc{C}}\Ga(\bar{y})\seq\x_1,\qquad d'_{12}\der{\lgc{C}}\Pi'(\bar{z}),\f_1(\bar{z}),\x_1\seq\De(\bar{z}),\\
&d'_{21}\der{\lgc{C}}\Ga(\bar{y})\seq\x_2,\qquad d'_{22}\der{\lgc{C}}\Pi'(\bar{z}),\f_2(\bar{z}),\x_2\seq\De(\bar{z}).
\end{align*}
Let $\x:=\x_1\mt\x_2$. Then the derivations $d'_{11},d'_{21}$, together with an application of $\arr$, and the derivations $d'_{12},d'_{22}$, together with applications of $\alr_1$, $\alr_2$, and  $\olr$, yield derivations $d_1$ and $d_2$, respectively, such that
\[
d_1\der{\lgc{C}}\Ga(\bar{y})\seq\x_1\mt\x_2,\qquad d_2\der{\lgc{C}}\Pi'(\bar{z}),\f_1(\bar{z})\jn\f_2(\bar{z}),\x_1\mt \x_2\seq\De(\bar{z}).
\]
The constraints on $\md(d_1)$ and $\md(d_2)$ again clearly hold.

\medskip\noindent
$\bullet$ $\orr_i$ ($i\in\{1,2\}$):  Suppose that $\De(\bar{z})$ is $\f_1(\bar{z})\jn\f_2(\bar{z})$ and
\[
d'\der{\lgc{C}}\Ga(\bar{y}),\Pi(\bar{z})\seq\f_i(\bar{z}).
\]
By the induction hypothesis, there exist a sentence $\x$ and derivations $d_1,d_2'$ such that
\[
d_1\der{\lgc{C}}\Ga(\bar{y})\seq\x,\qquad d_2'\der{\lgc{C}}\Pi(\bar{z}),\x\seq\f_i(\bar{z}).
\]
The derivation $d_2'$ together with an application of $\orr_i$ yields a derivation $d_2$ such that
\[
d_2\der{\lgc{C}}\Pi(\bar{z}),\x\seq\f_1(\bar{z})\jn\f_2(\bar{z}).
\]
The constraints on $\md(d_1)$ and $\md(d_2)$ clearly hold.

\medskip\noindent
$\bullet$ $\arr$: Suppose that $\De(\bar{z})$ is $\p_1(\bar{z})\mt\p_2(\bar{z})$ and
\[
d'_1\der{\lgc{C}}\Ga(\bar{y}),\Pi(\bar{z})\seq\p_1(\bar{z}),\qquad d'_2\der{\lgc{C}}\Ga(\bar{y}),\Pi(\bar{z})\seq\p_2(\bar{z}).
\]
By the induction hypothesis, there exist sentences $\x_1,\x_2$ and derivations $d'_{11},d'_{12},d'_{21},d'_{22}$ such that 
\begin{align*}
d'_{11}\der{\lgc{C}}\Ga(\bar{y})\seq\x_1, & \qquad d'_{12}\der{\lgc{C}}\Pi(\bar{z}),\x_1\seq\p_1(\bar{z}),\\
d'_{21}\der{\lgc{C}}\Ga(\bar{y})\seq\x_2, & \qquad d'_{22}\der{\lgc{C}}\Pi(\bar{z}),\x_2\seq\p_2(\bar{z}).
\end{align*} 
Let $\x:=\x_1\mt\x_2$. Then the derivations $d'_{11},d'_{21}$, together with an application of $\arr$, and the derivations $d'_{12},d'_{22}$, together with applications of $\alr_1$, $\alr_2$, and $\arr$, yield derivations $d_1$ and $d_2$, respectively, such that
\[
d_1\der{\lgc{C}}\Ga(\bar{y})\seq\x_1\mt\x_2,\qquad d_2\der{\lgc{C}}\Pi(\bar{z}),\x_1\mt\x_2\seq\p_1(\bar{z})\mt\p_2(\bar{z}).
\]
Clearly, the constraints on $\md(d_1)$ and $\md(d_2)$ are satisfied in this case.

\medskip\noindent
$\bullet$ $\alr_i$ ($i\in\{1,2\}$): Suppose first that $\Ga(\bar{y})$ is $\Ga'(\bar{y}),\f_1(\bar{y})\mt\f_2(\bar{y})$ and
\[
d'\der{\lgc{C}}\Ga'(\bar{y}),\f_i(\bar{y}),\Pi(\bar{z})\seq\De(\bar{z}).
\]
By the induction hypothesis, there exist a sentence $\x$ and derivations $d_1',d_2$ such that
\[
d_1'\der{\lgc{C}}\Ga'(\bar{y}),\f_i(\bar{y})\seq\x,\qquad d_2\der{\lgc{C}}\Pi(\bar{z}),\x\seq\De(\bar{z}).
\]
The derivation $d_1'$ and an application of $\alr_i$ yield a derivation $d_1$ satisfying
\[
d_1\der{\lgc{C}}\Ga'(\bar{y}),\f_1(\bar{y})\mt\f_2(\bar{y})\seq\x.
\]
The constraints on $\md(d_1)$ and $\md(d_2)$ clearly hold.

Now suppose that $\Pi(\bar{z})$ is $\Pi'(\bar{z}),\f_1(\bar{z})\mt\f_2(\bar{z})$ and
\[
d'\der{\lgc{C}}\Ga(\bar{y}),\Pi'(\bar{z}),\f_i(\bar{z})\seq\De(\bar{z}).
\]
By the induction hypothesis, there exist a sentence $\x$ and derivations $d_1,d_2'$ such that
\[
d_1\der{\lgc{C}}\Ga(\bar{y})\seq\x,\qquad d_2'\der{\lgc{C}}\Pi'(\bar{z}),\f_i(\bar{z}),\x\seq\De(\bar{z}).
\]
The derivation $d_2'$ together with an application of $\alr_i$ yields a derivation $d_2$ such that
\[
d_2\der{\lgc{C}}\Pi'(\bar{z}),\f_1(\bar{z})\mt\f_2(\bar{z}),\x\seq\De(\bar{z}).
\]
Again, the constraints on $\md (d_1)$ and $\md(d_2)$ hold. 

\pagebreak

\medskip\noindent
$\bullet$ $\pdrr$: Suppose that $\De(\bar{z})$ is $\f(\bar{z})\pd\p(\bar{z})$, $\Ga(\bar{y})$ is $\Ga_1(\bar{y}),\Ga_2(\bar{z})$, and $\Pi(\bar{z})$ is $\Pi_1(\bar{z}),\Pi_2(\bar{z})$, and
\[
d_1'\der{\lgc{C}}\Ga_1(\bar{y}),\Pi_1(\bar{z})\seq\f(\bar{z}),\qquad d_2'\der{\lgc{C}}\Ga_2(\bar{y}),\Pi_2(\bar{z})\seq\p(\bar{z}).
\]
By the induction hypothesis, there exist sentences $\x_1,\x_2$ and derivations $d'_{11},d'_{12},d'_{21},d'_{22}$ such that
\begin{align*}
d'_{11}\der{\lgc{C}}\Ga_1(\bar{y})\seq\x_1, & \qquad d'_{12}\der{\lgc{C}}\Pi_1(\bar{z}),\x_1\seq\f(\bar{z}),\\
d'_{21}\der{\lgc{C}}\Ga_2(\bar{y})\seq\x_2, & \qquad d'_{22}\der{\lgc{C}}\Pi_2(\bar{z}),\x_2\seq\p(\bar{z}).
\end{align*}
Let $\x:=\x_1\pd\x_2$. Then the derivations $d'_{11},d'_{21}$, together with an application of $\pdrr$, and the derivations $d'_{12},d'_{22}$, together with applications of $\pdrr$ and $\pdlr$, yield derivations $d_1$ and $d_2$, respectively, such that
\begin{align*}
&d_1\der{\lgc{C}}\Ga_1(\bar{y}),\Ga_2(\bar{y})\seq\x_1\pd\x_2,\qquad
d_2\der{\lgc{C}}\Pi_1(\bar{z}),\Pi_2(\bar{z}),\x_1\pd\x_2\seq\f(\bar{z})\pd\p(\bar{z}).
\end{align*}
The constraints on $\md(d_1)$ and $\md(d_2)$ clearly hold. 

\medskip\noindent
$\bullet$ $\pdlr$: Suppose first that $\Ga(\bar{y})$ is $\Ga'(\bar{y}),\f(\bar{y})\pd\p(\bar{y})$ and
\[
d'\der{\lgc{C}}\Ga'(\bar{y}),\f(\bar{y}),\p(\bar{y}),\Pi(\bar{z})\seq\De(\bar{z}).
\]
By the induction hypothesis, there exist a sentence $\x$ and derivations $d_1',d_2$ such that
\[
d_1'\der{\lgc{C}}\Ga'(\bar{y}),\f(\bar{y}),\p(\bar{y})\seq\x,\qquad d_2\der{\lgc{C}}\Pi(\bar{z}),\x\seq\De(\bar{z}).
\]
Then $d_1'$ and an application of $\pdlr$ yield a derivation $d_1$ such that
\[
d_1\der{\lgc{C}}\Ga'(\bar{y}),\f(\bar{y})\pd\p(\bar{y})\seq\x.
\]
The constraints on $\md(d_1)$ and $\md(d_2)$ clearly hold. 

Now suppose that $\Pi(\bar{z})$ is $\Pi'(\bar{z}),\f(\bar{z})\pd\p(\bar{z})$ and
\[
d'\der{\lgc{C}}\Ga(\bar{y}),\Pi'(\bar{z}),\f(\bar{z}),\p(\bar{z})\seq\De(\bar{z}).
\]
By the induction hypothesis, there exist a sentence $\x$ and derivations $d_1,d_2'$ such that
$$d_1\der{\lgc{C}}\Ga(\bar{y})\seq\x\quad\text{and}\quad d_2'\der{\lgc{C}}\Pi'(\bar{z}),\f(\bar{z}),\p(\bar{z}),\x\seq\De(\bar{z}).$$
Taking $d_2'$ and applying $\pdlr$ then yields a derivation $d_2$ such that
$$d_2\der{\lgc{C}}\Pi'(\bar{z}),\f(\bar{z})\pd\p(\bar{z}),\x\seq\De(\bar{z}).$$
Again, $\md(d_1)$ and $\md(d_2)$ satisfy the constraints.

\end{document}